 \newcommand{\bbM}{\mathbb{M}}
\newcommand{\calI}{\mathcal{I}}
\newcommand{\calJ}{\mathcal{J}}
\newcommand{\calK}{\mathcal{K}}
 \newcommand{\dom}{\text{dom }}
\newcommand{\tp}{\text{tp}}
\newcommand{\qftp}{\text{qftp}}
\newcommand{\TPi}{\text{TP}_1}
\newcommand{\TPii}{\text{TP}_2}
\newcommand{\Age}{\text{Age}}
 \newcommand{\vphi}{\varphi}
\newtheorem{theorem}{Theorem}
\newtheorem{corollary}{Corollary}
\newtheorem{proposition}{Proposition}
\newtheorem{example}{Example}
\theoremstyle{definition}
\newtheorem{definition}{Definition}
\theoremstyle{definition}
\newtheorem{remark}{Remark}
\theoremstyle{definition}
\newtheorem{notation}{Notation}
\theoremstyle{definition}
\title{Results on Colored Tree Properties}
\author{Gabriel Day}
\date{June 2025}
\begin{document}

\maketitle

\begin{abstract}
    In this paper, we introduce novel variations on several well-known model-theoretic tree properties, and prove several equivalences to known properties. Motivated by the study of generalized indiscernibles, we introduce the notion of the $\calI$-tree property ($\calI$-TP), for an arbitrary Ramsey index structure $\calI$. We focus attention on the colored linear order index structure \textbf{c}, showing that \textbf{c}-TP is equivalent to instability. After introducing \textbf{c}-$\TPi$ and \textbf{c}-$\TPii$, we prove that \textbf{c}-$\TPi$ is equivalent to $\TPi$, and that \textbf{c}-$\TPii$ is equivalent to IP. We see that these three tree properties give a dichotomy theorem, just as with TP, $\TPi$, and $\TPii$. Along the way, we observe that appropriately generalized tree index structures $\calI^{<\omega}$ are Ramsey, allowing for the use of generalized tree indiscernibles.
    
\end{abstract}

\section{Introduction}

At least since Shelah's pioneering work in \cite{shelah_classification_1990}, model theorists have taken keen interest in certain combinatorial ``patterning properties" defined for a partitioned formula $\vphi(x,y)$. These properties typically require that some infinite set of tuples which fit in the $y$ variable exists, and that certain collections of $\vphi$-formulas obtained by substituting elements from this set are consistent or inconsistent. Typical examples include the Order Property, Tree Property, and Independence Property. 

Traditionally, interest has concentrated on classes of theories which are characterized by the negations of certain nice properties, such as the stable, simple, and NIP theories. The central results of this paper may be understood, in a way, as giving further insight into those properties of traditional interest---new characterizations of stability and NIP are included, and the \textit{absence} of a new characterization of simplicity, by similar methods, is an interesting result.

However, these results may also be understood as coming from an attempt to understand patterning properties in general, by varying the index structure and studying the properties which emerge. The systematic study of patterning properties was inaugurated by Shelah's notion of \textit{straight definition} in \cite{shelah_what_2000}, and much clarified and elaborated in \cite{bailetti_walk_2024}. The notion of a \textit{exhibiting $n$-patterns} there offers a way of varying the index structure and producing distinct properties defined for a partitioned formula $\vphi(x,y)$. Making use of that methodology, the central definitions of this paper are variations of tree properties defined for the index structure $\omega^{<\omega}$, but with the ``horizontal" index structure $\omega$ replaced with an arbitrary Ramsey index structure $\calI$. Each of these corresponds to a new collection of $n$-patterns to exhibit, in Bailetti's parlance.

In their work on arbitrary patterning properties, both Shelah and Bailetti make a distinction between those properties where $\vphi$ appears only positively in the definition, and others in which both $\vphi$ and $\neg\vphi$ are needed. The former are called \textit{positively definable} properties, and it is interesting to note which properties are positively definable. In their traditional definitions, OP, IP, and SOP, for instance, involve $\neg\vphi$, while TP, $\TPi$, and $\TPii$ are positively definable. Two of the central results of this paper, Theorems \ref{c-TP unstable theorem} and \ref{IP=cTP2 theorem}, give positive characterizations of OP and IP, respectively. More broadly, this paper provides a new class of patterning properties, all of which are positive. This provides model theorists with a new tool for investigating such properties.

An essential piece of motivation for the definitions offered here is the study of generalized indiscernibles. While one can make sense of the $\calI$-tree property for an arbitrary structure $\calI$, we are centrally interested in those index structures which, like the linear order for OP and the tree for TP, allow one to find appropriately indiscernible instances. In Section \ref{indiscernibles section}, we retread the basics of generalized indiscernibles, with an eye to the index structure of primary interest to this paper: the colored linear order. In Section \ref{tree indiscernibles section}, we discuss new notions of tree indiscernibility, which will provide the necessary homogeneous index structures needed to enable the central proofs of the paper.

In broad strokes, this paper endeavors to further two goals in the study of generalized indiscernibles. First, it seeks to generalize notions, such as \textit{dividing}, which are defined in terms of order indiscernibles, to the setting of generalized indiscernibles. Second, it employs these new notions to characterize properties of theories, as discussed above. Previous work has been done toward both goals. In their paper \cite{malliaris_shelah_2021_separation_theorem}, Malliaris and Shelah introduce \textit{shearing}, a notion similar to dividing but for generalized index structures. They used shearing to prove new results in the study of the $\trianglelefteq^*$ ordering of theories, demonstrating its use for classification theory. In part, the study of generalized dividing in this paper was an attempt to see what could be done with a simplified version of shearing; substantial differences emerged, as noted below. The study of \textit{collapse of indiscernibles} results in \cite{guingona2017characterizing} gives another example of how generalized indiscernibles can be used to characterize properties of theories.

As the title suggests, this paper lavishes attention on the colored linear order index structure, and the tree properties which make use of it. Why study this somewhat-obscure index structure? The traditional tree properties TP, $\TPi$, and $\TPii$ correspond to the generalized tree properties where the the structure $\calI$ is taken to just be a linear order, with no further structure. The colored linear order appears to be a minor variation on the linear order, making for an easy version of the new tree properties to study. But the fact that this variation yielded results interestingly different from the linear order case came to highlight a particular feature of the colored linear order which sets it apart from all of the classical index structures: it contains multiple quantifier-free 1-types, one for each different color. In fact, the new characterization of stability in terms of \textbf{c}-TP may actually be made in terms of any $\calI$-TP, where $\calI$ is a Ramsey index structure with at least two non-algebraic quantifier-free 1-types (Corollary \ref{generalized stability characterization}). 

This fact gives us a general piece of information about different tree property variations. While the NTP theories are a superset of the stable theories---carving out the simple theories, a fertile ground for model-theoretic inquiry, the negation of $\calI$-TP for any $\calI$ which has multiple quantifier-free 1-types simply re-invents the stability dividing line. This shows that part of what makes the linear order such a fruitful index structure is that all of its elements possess a single quantifier-free 1-type.

\subsection{Conventions and notations}
We adopt  many standard model-theoretic conventions for this paper. We take a complete, consistent theory $T$ with infinite models in the background, $M$ an arbitrary model of $T$, and $\bbM$ a monster model.

When not specified, we use letters like $x$ and $y$ for finite tuples of variables, $a$ and $b$ for tuples of arbitrary finite length from $\bbM$. Abusing notation, we may write $a\in M$ even when $a$ is a tuple of length greater than 1. When considering an indexed collection of tuples $(a_t)_{t\in \calI}$, for some index structure $\calI$, and  $\bar{s}=(s_0,s_1,\dotsm, s_{n-1})$ is an $n$-tuple of indices, we abbreviate by $a_{\bar{s}}$ the tuple $(a_{s_0},a_{s_1},\dotsm, a_{s_{n-1}})$. We use letters $p,q$, and $r$ to denote types, and the operator $\qftp_\calI(i)$ to denote the complete quantifier-free type of a tuple $i\in\calI$. 

For a structure $\calI$ (in particular in the context where $\calI$ is an index structure), we mean by the \textit{age} of $\calI$, $\Age(\calI)$, the set of all finitely generated substructures of $\calI$, up to isomorphism.

We readily use standard set-theoretic notation for trees: in particular, we use $\omega^{<\omega}$ for the set of functions from finite initial segments of $\omega$ to $\omega$, giving an infinitely branching tree of height $\omega$. Similarly, we use $\calI^{<\omega}$ for the set of functions from initial segments of $\omega$ to $\calI$. Each of these trees is partially ordered by $\trianglelefteq$. These trees are rooted with a node $\emptyset$.

\section{Generalized Indiscernibility}\label{indiscernibles section}

In model theory, it is often useful to study collections of elements (or tuples) from a structure which are particularly uniform---which ``look similar" from the perspective of that structure. One notion of uniformity is that of an \textit{indiscernible sequence}. Indiscernible sequences are central for the definition of many model-theoretic terms---in particular, \textit{dividing}, \textit{forking}, and the entire study of independence notions. This naturally suggests that other forms of indiscernibility may be a rich ground for research. \textit{Generalized indiscernibles} are found in some form in \cite{shelah_classification_1990}, and have been employed in numerous contexts since then. I primarily follow the presentation and treatment from \cite{scow_characterization_2011}. 

\begin{definition}{}\label{generalized indiscernibles}
Suppose $\calI$ is an $L'$-structure and $M$ is an $L$-structure. Let $A=(a_i)_{i\in \calI}$ be a collection of tuples from $M$. Then we say that the $A$ is a \textit{generalized indiscernible (set)}, and in particular, an $\calI$\textit{-indexed indiscernible (set)}\footnote{Whenever indiscernibility is discussed in this paper, one could add ``over a set of parameters $C$", as usual. Unless otherwise stated (e.g., in the definition of dividing) we work over the empty set of parameters, for convenience.} if, for all $n\geq 1$ and $i_0,i_1,\dotsm, i_{n-1},j_0,j_1,\dotsm, j_{n-1}\in \calI$, if $\qftp_\calI(i_0,\dotsm, i_{n-1})=\qftp_\calI(j_0,\dotsm, j_{n-1})$, then $$\tp_M(a_{i_0},\dotsm a_{i_{n-1}})=\tp_M(a_{j_0},\dotsm, a_{j_{n-1}}).$$
\end{definition}

Indiscernible sequences found so much utility in part because they are so plentiful. Applications of Ramsey's Theorem and compactness allow one to find indiscernible sequences that ``look like" any arbitrary sequence of tuples in the monster model of any theory. This property, \textit{modeling property}, carries over to all other notable kinds of indiscernibles. What it means for a collection of tuples indexed by some structure to ``look like" another is the following:

\begin{definition}
    Suppose $\calI$ is a structure expanding a linear order, $M,N$ are models of an $L$-theory, and $A=\{a_i\colon i\in \calI\}$ is a set of parameters in $M$. Then we say that a set of parameters $B=\{b_i\colon i\in \calI\}$ from $N$ is \textit{locally based} on $A$, (or that the $b_i$ are \textit{locally based} on the $a_i$), if for any finite set $\Delta$ of $L$-formulas, and any finite tuple $\bar{t}=t_1\dotsm t_n$ from $\calI$, there exists a tuple $\bar{s}=s_1\dotsm s_n$ in $\calI$ such that \[\qftp_\calI(\bar{t})=\qftp_\calI(\bar{s})\enspace \text{and}\enspace \tp_N^\Delta(\bar{b}_{\bar{t}})=\tp_M^\Delta(\bar{a}_{\bar{s}}).\]
\end{definition}

Thus we may define the modeling property.

\begin{definition}{}
For a structure $\calI$, we say that $\calI$-indexed indiscernibles have the \textit{modeling property} if, given any $\calI$-indexed set $(a_i\colon i\in\calI)$ in an arbitrary structure $M$, there exist $\calI$-indexed indiscernibles $(b_i\colon i\in\calI)$ in $N\succeq M$ locally based on the $a_i$. Equivalently, one can find such $b_i$ in the monster model of $M$'s theory.

\end{definition}

In \cite{scow_characterization_2011}, Scow proved that, under some natural assumptions, a structure $\calI$ has the modeling property precisely when its age is a \textit{Ramsey class}, i.e., when a structural version of Ramsey's theorem holds.\footnote{See \cite{Kim_Kim_Scow_2013} for details on the Ramsey property. The modeling property is what concerns us in this paper, so we avoid deeper discussion of the Ramsey property, but are happy to borrow this name for ``good" index structures, in keeping with convention.} This result extends the classical proof, using Ramsey's Theorem, that infinite linear orders have the modeling property to the generalized indiscernible setting. We state the theorem in a form with the technical assumptions dropped, as was proved possible in \cite{meir_popadopoulos_2022practical}.

\begin{theorem}\label{Scow's theorem}
Suppose $\calI$ is a  
locally finite structure in a language $L$ including a relation $<$ linearly ordering its domain. Then  $\Age(\calI)$ is a Ramsey class if and only if $\calI$-indexed indiscernibles have the modeling property.    
\end{theorem}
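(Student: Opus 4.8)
The plan is to prove the two directions of Theorem \ref{Scow's theorem} separately, with the Ramsey-implies-modeling direction being the substantive one, and the converse being a relatively quick compactness-and-pigeonhole argument.

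For the direction that $\Age(\calI)$ Ramsey implies the modeling property, I would proceed by a standard compactness reduction followed by an application of the structural Ramsey property. Fix an $\calI$-indexed set $(a_i : i \in \calI)$ in $M$. The goal is to produce indiscernibles $(b_i : i \in \calI)$ locally based on the $a_i$. By compactness, it suffices to realize, for each finite set $\Delta$ of $L$-formulas and each finite substructure (equivalently, each finite tuple, using the linear order to fix an enumeration) $\bar{t}$ from $\calI$, a tuple $\bar{b}_{\bar t}$ whose $\Delta$-type matches $\tp^\Delta_M(\bar a_{\bar s})$ for some $\bar s$ with $\qftp_\calI(\bar s) = \qftp_\calI(\bar t)$, while maintaining that any two tuples of the same quantifier-free type get the same $\Delta$-type. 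Concretely, I would set up a consistency argument: the theory in the language $L$ together with new constants $(b_i)_{i\in\calI}$, asserting (i) local basedness and (ii) full indiscernibility, is finitely satisfiable. To witness finite satisfiability of a piece involving finitely many indices and the finite formula set $\Delta$, one colors the finite substructures of $\calI$ of a fixed isomorphism type by their $\Delta$-type in $M$ (there are finitely many such colors), and applies the Ramsey property of $\Age(\calI)$ to extract a large monochromatic configuration; local finiteness of $\calI$ is what guarantees the relevant substructures are finite, so the Ramsey property applies. Iterating over the finitely many isomorphism types of substructures appearing, one obtains a sub-configuration that is $\Delta$-indiscernible and locally based, witnessing the needed finite fragment.

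For the converse, that the modeling property implies $\Age(\calI)$ is a Ramsey class, I would argue contrapositively. Suppose $\Age(\calI)$ fails the Ramsey property: there are finite structures $A \trianglelefteq B$ in $\Age(\calI)$ and a number $k$ such that for every copy of $B$ in $\calI$ — or rather, by a homogenization/compactness argument, for every $\calI$-indexed configuration modeled on $\calI$ — there is a $k$-coloring of the copies of $A$ with no monochromatic copy of $A$ inside a copy of $B$. One then builds a structure $M$ in a language with a predicate coding this bad coloring (place the coloring data on the $a_i$ themselves), so that any $\calI$-indexed indiscernible set locally based on $(a_i)$ would have to be monochromatic on copies of $A$ within each copy of $B$ by indiscernibility, yet local basedness forces it to reflect the coloring and hence contain a rainbow copy of $B$ — a contradiction. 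This is the standard Nešetřil–Rödl-style correspondence between Ramsey classes and indiscernible-modeling; since the paper cites \cite{scow_characterization_2011} and \cite{meir_popadopoulos_2022practical}, I would invoke their formulations and just indicate the coding.

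The main obstacle, and the step deserving the most care, is the Ramsey extraction inside the compactness argument for the first direction: one must correctly handle configurations built from \emph{several} distinct isomorphism types of finite substructures of $\calI$ simultaneously (since a single finite tuple $\bar t$ sits inside many finite substructures of varying types), iterating the Ramsey property once per type while ensuring earlier monochromaticity is not destroyed — this is exactly where local finiteness is used, to keep each coloring problem finite. A secondary subtlety is the passage from "$\Delta$-indiscernible locally based configurations exist for each finite $\Delta$" to a genuine $\calI$-indexed indiscernible in an elementary extension (or the monster), which is the usual compactness step but requires writing down the type over $\{b_i : i \in \calI\}$ carefully so that the quantifier-free-type hypotheses of Definition \ref{generalized indiscernibles} are matched on the nose.
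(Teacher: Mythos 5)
The paper does not prove this theorem at all: it is quoted from the literature, with the equivalence due to Scow \cite{scow_characterization_2011} and the removal of her technical hypotheses due to \cite{meir_popadopoulos_2022practical}. So there is no in-paper proof to compare yours against; what you have written is an outline of the standard argument from those sources, and as an outline it is essentially correct. Your forward direction (compactness reduction, coloring copies of each finite substructure type by $\Delta$-type, iterated Ramsey extraction over the finitely many isomorphism types below a given $B$, with local finiteness keeping each coloring problem finite) is exactly how the cited proofs go, and you correctly identify the iteration over substructure types as the delicate point. Two places in your converse sketch deserve tightening. First, the negation of the Ramsey property is that for some $A\trianglelefteq B$ and some $k$, \emph{every} $C\in\Age(\calI)$ admits a $k$-coloring of its copies of $A$ with no copy of $B$ all of whose copies of $A$ are monochromatic; your phrasing (``no monochromatic copy of $A$ inside a copy of $B$'') garbles this, and you then need an explicit compactness/K\"onig argument to glue these finite bad colorings into a single bad coloring of all copies of $A$ in $\calI$ before you can encode it into relations on the $a_i$. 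Second, the encoding step silently uses that, because $<$ linearly orders $\calI$ and $\calI$ is locally finite, isomorphic finitely generated substructures are enumerated (in increasing order) by tuples of the same quantifier-free type; this rigidity is what lets indiscernibility of the $b_i$ force monochromaticity of copies of $A$ and lets local basedness pull a monochromatic copy of $B$ back to the $a_i$. Finally, be aware that your sketch does not engage with why Scow's original extra hypotheses can be dropped; that is the content of \cite{meir_popadopoulos_2022practical} and is not free, so if you intend this as a self-contained proof rather than a reconstruction of the cited one, that gap would need to be addressed.
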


Henceforth, index structures with Ramsey ages, or equivalently, those with the modeling property, will be called \textit{Ramsey index structures}. Many natural examples of Ramsey index structures have been discovered, and their correspondent types of indiscernibles have found deep and widespread use. The ordered random graph in the language $\{<, R\}$ is Ramsey, and it was employed in \cite{SCOW_2012_NIP} to prove a collapse of indiscernibles result for the Independence Property. Tree structures in various languages have been among the earliest and most fruitful Ramsey structures, appearing first in\cite{shelah_classification_1990}, receiving treatment in \cite{Kim_Kim_Scow_2013} and \cite{TAKEUCHI_Tsuboi_2012}, and finding numerous applications in the study of model-theoretic tree properties (such as $\TPi$ and $\TPii$) in \cite{Chernikov_Ramsey_2016} and \cite{Kaplan_Ramsey_2017}, for example. We will return to tree indiscernibles in Section \ref{tree indiscernibles section}.

As Ramsey theory is a subject of mathematical interest independent of model theory, there are many classes of structures known to be Ramsey, and techniques for producing more. With these in hand, Scow's Theorem \ref{Scow's theorem} provides a dictionary which furnishes model theorists with numerous ready-made Ramsey index structures. A particularly useful result from Ramsey theory, which will give us our main index structure for this paper, is due to Ne\v set\v ril and R\"odl \cite{NESETRIL_Rodl_1983}. It states that if $\tau$ is a relational language and $<$ a binary relation, then the class of all $\tau\cup\{<\}$-structures in which $<$ is interpreted as a linear order is a Ramsey class.

This brings us to the \textit{colored linear order}, a central index structure for the results which follow.

\begin{definition}{}\label{colored linear order}
Let $L_\kappa$  be the language $\{<, (c_\alpha)_{\alpha<\kappa}\}$, where $2\leq \kappa$ is a cardinal, each $c_\alpha$ is a unary predicate. Let $\calK_{color,\kappa}$be the class of finite structures in which $<$ is interpreted as a linear order, and the $c_\alpha$ are nonempty and partition the domain (i.e., they color it); we call each $c_\alpha$ a \textit{color}.

By the \textbf{colored linear order} index structure, denoted $\textbf{c}_\kappa$, we mean the Fra\"iss\'e limit of $\calK_{color,\kappa}$. When the value of $\kappa$ is irrelevant or clear from context, we shall simply write \textbf{c} in place of $\textbf{c}_\kappa$.
\end{definition}

This structure \textbf{c} is a Ramsey, by the result of Ne\v set\v ril and R\"odl above. The reduct of \textbf{c} to $<$ is a dense linear order. All of the colors are dense in this structure.\footnote{To my knowledge, the only other place that infinite colored linear orders have been used as an index structure is in \cite{MALLIARIS_Shelah_2019}, in particular for the proof that the theory of the random graph is not maximal in the $\trianglelefteq^*_1$ order (Theorem 5.5). Malliaris and Shelah sometimes utilize colored linear orders which are ``separated", i.e., where every element of the linear order has  a unique color. We are interested in the Fr\"iass\'e limit, in which each color is dense.} Even though \textbf{c}, properly speaking, has this order type, compactness allows us to be flexible with what colored order we consider, as long as we maintain the correct age.

\begin{remark}\label{changing order type}{}
Suppose that $\calI,\calI'$ are infinite structures in the same language such that $\Age(\calI)=\Age(\calI')$. Given a set of tuples $(a_i)_{i\in\calI}$, there exists a set of tuples $(a'_i)_{i\in\calI'}$ locally based on the $a_i$. Likewise, given a set of tuples $(b_i)_{i\in\calI'}$, there exists a set of tuples $(b'_i)_{i\in\calI}$ locally based on the $a_i$.
    
\end{remark}

This follows by an elementary application of the compactness theorem. As we will see, this fact affords us flexibility to take \textbf{c} to be of a convenient order type, when we wish. Abusing notation, we may just specify that \textbf{c} \textit{is} such-and-such a structure, as long as the age is the same. Often, we will take  the two-colored linear order to have the order type of $\omega$, with even elements being one color (call it ``red") and odd elements another (call it ``green").

\begin{notation}
    Since we will often be concerned with quantifier-free types in the language of colored linear orders, we introduce the following conventional notation. Suppose that $c_1,\dotsm, c_k$  are color predicates. Then we may denote by $c_1<\dotsm <c_k$  the quantifier-free type of $k$-many elements in increasing order, such that the $i$th element has color $c_i$. When $R$ and $G$ are used as color predicates, we may , for instance, refer to the quantifier-free type $R<G$.
\end{notation}

\section{New tree indiscernibilities}\label{tree indiscernibles section}
While \textbf{c} is, in a sense, the only particular Ramsey index model we are concerned with in this paper, there is another class of index models which are essential. These are natural generalizations of the tree index structures, and give generalizations of the tree indiscernibilities, which have received considerable prior attention. I follow the terminology for these structures established in \cite{Kim_Kim_Scow_2013}.

Three main languages for trees have been used to treat them as index structures. All of them contain the tree partial order $\trianglelefteq$, as well as the lexicographic order $<_{lex}$ and the meet function $\land$. The first of these languages, $L_0$, consists of only these symbols. A second language, $L_{str}$, adds a  preorder, $<_{len}$, which compares the lengths of nodes. A final language, $L_s$, omits the length partial order but adds an infinite collection of unary predicates, $(P_\alpha)_{\alpha<\kappa}$, where $\kappa$ is the height of the tree. The $\alpha$th of these predicates is taken to hold of precisely the nodes of length $\alpha$. In this paper, we shall only be concerned with $L_0$ and $L_s$.

When regarded as $L_0$- and $L_s$-structures, the trees $\lambda^{<\kappa}$, where $\lambda$ and $\kappa$ are infinite cardinals, are all Ramsey index structures. They are thus well-suited to serve as index structures for indiscernibles. The interest in  tree indiscernibles lies centrally in the fact that tree properties, like TP and $\TPi$, require that there be a tree-indexed collection of parameters $(a_\eta)_{\eta\in \omega^{<\omega}}$ satisfying various properties (See Definitions \ref{tree property} and \ref{TP1 definition}), and it is often helpful, when manipulating such a tree of parameters, that it have the uniformity guaranteed by indiscernibility. However, witnesses to TP and $\TPi$ cannot be taken to have the same ``degree" of indiscernibility, since they demand different things of the tree of parameters: TP can, in general, only have the weaker $L_s$-indiscernibility, while a witness to $\TPi$ can be taken as $L_0$-indiscernible.\footnote{In general, if $\calJ$ is a reduct of $\calI$, we have that $\calJ$ indiscernibility implies $\calI$ indiscernibility. This is immediate from Definition \ref{generalized indiscernibles}, since the reduct language has fewer quantifier-free types, and so more tuples are required to have the same types in order to attain indiscernibility.}

Some of the central results of this paper concern generalizations of TP and $\TPi$ which demand the existence of a collection of parameters indexed by $\calI^{<\kappa}$, where $\calI$ is a Ramsey index structure. As in the classical context, indiscernible witnesses to these properties are often useful. But $\calI^{\kappa}$ calls for a richer language than the tree languages above, and thus of new indiscernibility results. Fortunately, both are easily adapted.

\begin{definition}{}\label{I trees}
    Suppose $\calI$ is a Ramsey index structure in the language $L_\calI$. Let $L_\calI'$ be the $L_\calI$ but with all $n$-ary function symbols replaced with $n+1$-ary relation symbols. Let $L_{0, \calI}=\{\trianglelefteq,<_{lex},\land\}\cup L_\calI'$. Let $L_{s,\kappa,\calI}=\{\trianglelefteq,<_{lex},\land,(P_\alpha)_{\alpha<\kappa}\}\cup L_\calI'$, for $\kappa$ an infinite cardinal (corresponding to the tree's height).
    
    %Then $s_{\textbf{c}}$-indiscernibles $(a_{\eta})_{\eta\in \textbf{c}^{<\omega}}$ are \textbf{s}-indiscernibles in which the children of a common node form a \textbf{c}-structure, and if $\bar{i},\bar{j}$ are tuples from $\textbf{c}^{<\omega}$, then if $\qftp_{L_{s_\textbf{c}}}(\bar{i})=\qftp_{L_{s_\textbf{c}}}(\bar{j})$, we have $a_{\bar{i}}\equiv a_{\bar{j}}$.
\end{definition}

On the set $\calI^{<\kappa}$, these languages are interpreted as follows: The tree partial order $\trianglelefteq$ is interpreted as usual, as is the lexicographic order, which respects the total ordering of $\calI$. In $P_{\alpha}$ is taken to hold of all $\nu\in \calI^{\alpha}$, i.e., the $\alpha$th level of the tree. $L_\calI'$ is interpreted such that given any $\eta\in\calI^{<\kappa}$, the set of nodes immediately following $\eta$ is isomorphic to $\calI$ in the obvious $L_\calI'$ translation in which function symbols are replaced by their graphs. No relations are taken to hold between different copies of $\calI$. The replacement of function symbols with relations is to avoid the need to define functions taking values from different copies of $\calI$.

The traditional notions of tree indiscernibility may be seen as special cases of this definition. This may be done by regarding $\omega$ as $(\omega,<)$, an infinite linear order and Ramsey index structure. The linear order introduced is redundant with the lexicographic order restricted to each copy of $\omega$ in the tree.

The fact that $(\omega, <)$ is Ramsey (i.e., that Ramsey's Theorem holds) figures into the proof that $L_{s,\kappa}$-trees have the modeling property, in particular in Claim 4.9 of \cite{Kim_Kim_Scow_2013}. That proof goes through identically to show that $L_{s,\kappa,\calI}$-trees have the modeling property for $\calI$ Ramsey, simply using the fact that $\calI$ has the modeling property when one comes to Claim 4.9. Thus we have the following:

\begin{proposition}{}\label{L_s,I trees have modeling property}
    Suppose $\calI$ is a Ramsey index structure. When $\calI^{<\omega}$ is given an $L_{s,\kappa,\calI}$-structure, it has the modeling property, and hence $\text{Age}(\calI^{<\omega})$ is a Ramsey class.

   % \begin{proof}
    %    The argument from Kim, Kim, and Scow's ``Tree indiscernibilies, revisited" showing that s-indiscernibles have the modelling property goes through for  $s_\calI$-indiscernibes identically. By a theorem of Scow, this implies $\text{Age}(\calI^{<\omega})$ is Ramsey.
    %\end{proof}
\end{proposition}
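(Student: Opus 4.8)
The plan is to prove the substantive half of the statement --- that $\calI^{<\omega}$, with its $L_{s,\kappa,\calI}$-structure, has the modeling property --- and then read off the ``hence'' clause from Scow's Theorem~\ref{Scow's theorem}. For the latter step it suffices to note that $L_{s,\kappa,\calI}$ contains the linear order $<_{lex}$ on $\calI^{<\omega}$, and that $\calI^{<\omega}$ is locally finite: the only function symbol of $L_{s,\kappa,\calI}$ is the meet $\wedge$ (all symbols coming from $L_\calI'$ are relational, and $\trianglelefteq$, $<_{lex}$, the $P_\alpha$ are relations), and the closure of a finite set of tree nodes under $\wedge$ is finite. So once the modeling property is established, Scow's Theorem gives at once that $\Age(\calI^{<\omega})$ is a Ramsey class.

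For the modeling property itself, I would follow the argument of Kim--Kim--Scow \cite{Kim_Kim_Scow_2013} for the classical $L_{s,\kappa}$-tree $\omega^{<\omega}$, changing exactly one ingredient. Recall the shape of that proof: starting from an arbitrary $\calI^{<\omega}$-indexed family $(a_\eta)_{\eta\in\calI^{<\omega}}$ in some $M$, one fixes a finite set $\Delta$ of formulas and a finite tuple of target nodes, constructs --- working down the tree level by level, and passing to a limit by compactness --- a family $(b_\eta)$ whose $\Delta$-types depend only on the $L_{s,\kappa,\calI}$-quantifier-free type of the index tuple and which is locally based on the $a_\eta$. The construction alternates between homogenizing the behaviour of the immediate successors of a fixed node and bookkeeping with the purely tree-theoretic relations $\trianglelefteq$, $<_{lex}$, $\wedge$, $(P_\alpha)_{\alpha<\kappa}$. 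The homogenization step is exactly where Claim~4.9 of \cite{Kim_Kim_Scow_2013} invokes Ramsey's Theorem for $(\omega,<)$ --- i.e. the modeling property of the structure indexing the immediate successors of each node. In our setting those successors form a copy of $\calI$ (Definition~\ref{I trees}), so at precisely this point I would instead invoke the modeling property of $\calI$, which holds by hypothesis. Every other step refers only to $\trianglelefteq$, $<_{lex}$, $\wedge$, and the $P_\alpha$, none of which has changed, so those steps transfer verbatim.

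The main thing to check --- and essentially the only place anything could go wrong --- is that this single substitution is legitimate: that the appeal to $\omega$-Ramsey in Claim~4.9 is genuinely local to one copy of the successor structure, and that ``homogenize one copy at a time'' still yields full $L_{s,\kappa,\calI}$-indiscernibility. This is exactly what the stipulation in Definition~\ref{I trees} --- that $L_\calI'$ relates no two nodes lying in distinct copies of $\calI$ --- is designed to deliver: the $L_\calI'$-quantifier-free type of a tuple of nodes is the disjoint union of the $\calI$-quantifier-free types of its restrictions to the individual copies, so homogenizing each copy separately via the modeling property of $\calI$, together with the (unchanged) homogenization of the ambient tree relations, controls the full $L_{s,\kappa,\calI}$-quantifier-free type. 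I would also verify that $\omega$ enters the \cite{Kim_Kim_Scow_2013} argument nowhere else --- all other combinatorial input there is either this branching-Ramsey step or the finiteness of the tree languages, which is unaffected. Granting that, the transfer is routine; the content of the proposition is really that the Kim--Kim--Scow argument was never about $\omega$ in particular, only about the index structure at each branching point being Ramsey.
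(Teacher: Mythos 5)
Your proposal is correct and follows essentially the same route as the paper: the paper likewise observes that the only place Ramsey's Theorem for $(\omega,<)$ enters the Kim--Kim--Scow argument is Claim~4.9, and that substituting the modeling property of $\calI$ at that single point (legitimately, since $L_\calI'$ imposes no relations between distinct copies of $\calI$) yields the result. Your additional check of local finiteness and the presence of $<_{lex}$ for the ``hence'' clause via Scow's Theorem is a reasonable elaboration of what the paper leaves implicit.
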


%[cite who proved relevant kinds of indiscernibility: \cite{shelah_classification_1990},\cite{TAKEUCHI_Tsuboi_2012}\cite{Kim_Kim_Scow_2013}]

Extending to the stronger notion of indiscernibility, that which uses $L_{0,\calI}$, likewise only makes use of the fact that $(\omega,<)$ is Ramsey. This may be seen in \cite{TAKEUCHI_Tsuboi_2012}, where the authors bootstrap the fact that $L_{s}$ trees have the modeling property to find that $L_{str}$- and finally $L_0$- trees likewise have it (Theorems 15 and 16, respectively).\footnote{Some differences in terminology slightly obfuscate matters---for instance, they call $str$-indiscernibility 1-indiscernibility, and the several ``subtree properties" are easily seen to correspond to trees in the respective languages having the modeling property.} Thus, the correspondent generalized tree indiscernibilities also go through.

\begin{proposition}{}\label{L_0,I trees have modeling property}
    Suppose $\calI$ is a Ramsey index structure. When $\calI^{<\omega}$ is given an $L_{0,\calI}$-structure, it has the modeling property, and hence $\text{Age}(\calI^{<\omega})$ is a Ramsey class.

\end{proposition}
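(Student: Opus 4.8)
The plan is to run the two-step bootstrap by which Takeuchi and Tsuboi pass from $s$-indiscernibility to $0$-indiscernibility for ordinary trees in \cite{TAKEUCHI_Tsuboi_2012}, starting from the base case already in hand, Proposition~\ref{L_s,I trees have modeling property}. First I would introduce the intermediate language $L_{str,\calI}:=L_{0,\calI}\cup\{<_{len}\}$, where $<_{len}$ is the preorder comparing lengths of nodes, interpreted on $\calI^{<\omega}$ in the obvious way. Since $<_{len}$ is quantifier-free definable from the level predicates $(P_\alpha)$, every $L_{str,\calI}$-indiscernible set is automatically $L_{s,\kappa,\calI}$-indiscernible, and every $L_{0,\calI}$-indiscernible set is automatically $L_{str,\calI}$-indiscernible; so the target modeling property sits strictly above the one already established, and it suffices to climb the two rungs $L_{s,\kappa,\calI}\rightsquigarrow L_{str,\calI}\rightsquigarrow L_{0,\calI}$. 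The final clause of the Proposition then follows from Scow's Theorem~\ref{Scow's theorem}, since $\calI^{<\omega}$ is locally finite ($\calI$ being so) and $L_{0,\calI}$ contains the linear order $<_{lex}$.

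For the first rung: given a $\calI^{<\omega}$-indexed set $(a_\eta)$ in an arbitrary structure, I would extract $L_{s,\kappa,\calI}$-indiscernibles locally based on it by Proposition~\ref{L_s,I trees have modeling property}. The quantifier-free $L_{s,\kappa,\calI}$-type of a tuple already pins down the level of each node; what an $L_{str,\calI}$-indiscernible witness additionally requires is uniformity across \emph{distinct} levels, which \cite{TAKEUCHI_Tsuboi_2012} obtains by re-indexing into a ``stretched'' copy of the tree (spacing the levels out), applying the weaker modeling property there, and pulling back. These manipulations rearrange only the tree coordinate and leave untouched the copy of $\calI$ sitting above each node; since $L_\calI'$ relates only immediate successors of a common node, its interpretation is carried along verbatim, and the classical argument transfers unchanged.

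For the second rung: the passage from $L_{str}$ to $L_0$ in \cite{TAKEUCHI_Tsuboi_2012} takes an $L_{str,\calI}$-indiscernible set and homogenizes the residual data---roughly, the relative lengths and lexicographic positions of $\trianglelefteq$-incomparable nodes---by a further Ramsey reduction. In the classical argument this reduction is exactly where Ramsey's theorem for $(\omega,<)$ enters; in our setting the set of immediate successors of a node is a copy of $\calI$ rather than of $\omega$, and the analogous reduction is licensed precisely by $\calI$ having the modeling property, which is our standing hypothesis. I expect this to be the one point genuinely needing verification: that every invocation of ``$\omega$ is Ramsey'' in Theorems~15 and~16 of \cite{TAKEUCHI_Tsuboi_2012} can be replaced by an invocation of the modeling property of $\calI$, and that the re-indexings and sub-tree restrictions used there respect the interpretation of $L_\calI'$ (each successor set isomorphic to $\calI$, with no relations across distinct successor sets). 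Granting this---which is plausible exactly because the $L_\calI'$-structure is local to single successor sets while the bootstrap only permutes and thins the tree coordinate---the classical proofs go through, yielding the $L_{0,\calI}$-modeling property, and hence, via Scow's Theorem~\ref{Scow's theorem}, that $\Age(\calI^{<\omega})$ is a Ramsey class.
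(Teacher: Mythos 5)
Your proposal is correct and follows essentially the same route as the paper, which likewise establishes Proposition~\ref{L_0,I trees have modeling property} by running the Takeuchi--Tsuboi bootstrap ($L_{s}\rightsquigarrow L_{str}\rightsquigarrow L_{0}$, their Theorems~15 and~16) and observing that the sole use of ``$(\omega,<)$ is Ramsey'' can be replaced by the modeling property of $\calI$, with the $L_\calI'$-structure unaffected since it is local to each successor set. Your write-up is in fact more explicit than the paper's about the direction of the indiscernibility implications and about where the verification burden lies, but the argument is the same.
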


The final notion of indiscernibility needed in this paper is a correspondent of strong array indiscernibility, which will be useful in discussion of generalized $\TPii$ below. While $\TPii$ is defined in terms of arrays $(a_{i,j})_{i<\kappa,j<\lambda}$ (see Definition \ref{TP2 definition}), we will use as index structure arrays in which the rows are copies of some Ramsey index structure $\calI$, namely $(a_{i,j})_{i<\kappa,j\in\calI}$.

\begin{definition}
We call an array $A=(a_{i,j})_{i<\kappa,j<\lambda}$ \textit{mutually indiscernible} if, for all $i<\kappa$, the sequence $(a_{i,j})_{j<\lambda}$ is indiscernible over $(a_{k,j})_{k\neq i, j<\lambda}$. $A$ is a \textit{strongly indiscernible array} if it is a mutually indiscernible array such that the sequence of rows, $(\{a_{i,j}\colon j<\lambda\})_{i<\kappa}$, is an indiscernible sequence. 
\end{definition}

By Lemmas 1.2 and 1.3 of \cite{chernikov2014theories}, given any array indexed by $\kappa\times\lambda$ with the structure of a convexly ordered equivalence relation (where the rows are equivalent), one may find a strongly indiscernible array locally based on it. As with the tree results, only the fact that rows have Ramsey index structures is needed for the proof. Thus we may generalize. 

\begin{definition}
 We call an array $A=(a_{i,j})_{i<\kappa,j\in\calI}$ \textit{mutually $\calI$-indiscernible} if, for all $i<\kappa$, the sequence $(a_{i,j})_{j\in\calI}$ is indiscernible over $(a_{k,j})_{k\neq i, j\in\calI}$. $A$ is a \textit{strongly $\calI$-indiscernible} array if it is a mutually $\calI$-indiscernible array such that the sequence of rows, $(\{a_{i,j}\colon j\in\calI\})_{i<\kappa}$, is an indiscernible sequence. 
\end{definition}

We likewise give $\kappa\times \calI$ the structure of a convexly ordered equivalence relation, where the copies of $\calI$ are equivalence classes. As in Definition \ref{I trees}, we replace function symbols in $L_\calI$ with relations which, for each $\alpha<\kappa$, give the graphs of the corresponding function symbols on $\{\alpha\}\times\calI$. This makes $\{\alpha\}\times\calI$ isomorphic to $\calI$.

\begin{proposition}\label{array indiscernibility}
    Suppose $\calI$ is a Ramsey index structure. Given an array $(a_{i,j})_{i<\kappa,j\in\calI}$, we may find a strongly $\calI$-indiscernible array locally based on it.
\end{proposition}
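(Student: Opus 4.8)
The plan is to recognize $\kappa\times\calI$, together with the structure described immediately before the statement, as an index structure in its own right, and to read the proposition off from the modeling property for that structure. Let $\calI^{\ast}$ be the structure with universe $\kappa\times\calI$ in the language containing: a binary relation $\prec$, interpreted as the convex linear order comparing first by the $\kappa$-coordinate and then, within each class $\{\alpha\}\times\calI$, by the order of $\calI$; a binary relation $E$ whose classes are exactly the rows $\{\alpha\}\times\calI$; and, for each function symbol of $L_{\calI}$, a relation symbol interpreted on each row as the graph of that function and holding of no tuple meeting two distinct rows --- exactly the translation used in Definition~\ref{I trees}. Then $\calI^{\ast}$ expands the linear order $\prec$, and it is locally finite whenever $\calI$ is, since a finite subset of $\kappa\times\calI$ generates only finitely many further elements (rowwise, with nothing generated across rows). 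I take $\kappa$ to be infinite, which is the case of interest, so that $\Age(\calI^{\ast})$ does not depend on which infinite $\kappa$ is chosen.

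The point of this packaging is that, for an array $(a_{i,j})_{i<\kappa,\,j\in\calI}$, being a strongly $\calI$-indiscernible array is precisely the same as being an $\calI^{\ast}$-indexed indiscernible set, and the two meanings of ``locally based'' coincide. This comes down to inspecting quantifier-free types in $\calI^{\ast}$: the $L_{\calI^{\ast}}$-quantifier-free type of a tuple from $\kappa\times\calI$ records exactly which entries share a row, the $\prec$-order of the rows that occur, and, within each occurring row, the quantifier-free $\calI$-type of the entries there --- that is, precisely the ``shape'' of the corresponding sub-array. Testing such a shape against tuples that keep every row fixed recovers mutual $\calI$-indiscernibility; testing it against tuples that carry one increasing block of rows to another increasing block of rows recovers indiscernibility of the sequence of rows; together these two are exactly $\calI^{\ast}$-indexed indiscernibility, and ``locally based'' on either side unwinds to the same finitary statement about $\Delta$-types of finite sub-arrays. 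Hence, once $\calI^{\ast}$ is known to have the modeling property, applying it to $(a_{i,j})$ yields a strongly $\calI$-indiscernible array locally based on it, which is the proposition.

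It remains to show that $\calI^{\ast}$ has the modeling property. Since $\calI^{\ast}$ is locally finite and expands a linear order, Scow's theorem (Theorem~\ref{Scow's theorem}) reduces this to showing that $\Age(\calI^{\ast})$ is a Ramsey class. Its members are the finite $\prec$-convexly ordered families of $E$-classes, each class an element of $\Age(\calI)$, and Ramseyness follows by the standard two-step (product-Ramsey) argument: given a target with $m$ classes and a pattern $A$ occupying at most $\ell$ classes, one first enlarges each class enough that, applying the Ramsey property of $\Age(\calI)$ iteratively over the at most $\ell$ classes a copy of $A$ can meet, the colour of a copy of $A$ may be taken to depend only on the increasing tuple of classes it uses; one then applies the finite Ramsey theorem for linear orders to the family of classes, extracting $m$ of them forming a monochromatic block, hence a monochromatic copy of the target. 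This is exactly the argument of Lemmas~1.2 and~1.3 of \cite{chernikov2014theories} --- Lemma~1.2 yielding mutual indiscernibility and Lemma~1.3 the sequence-of-rows indiscernibility --- with the appeals there to Ramsey's theorem on $n$-element subsets, used to regularize behaviour \emph{within} a row, replaced throughout by the Ramsey property of $\Age(\calI)$, equivalently the modeling property for $\calI$; the appeal used to regularize the \emph{sequence of rows} remains an appeal to ordinary finite Ramsey, and a routine compactness argument glues the two steps if one prefers to bypass $\Age(\calI^{\ast})$ altogether.

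The only genuine work, and the step I expect to be the main obstacle, is verifying that this combination goes through for an arbitrary Ramsey $\calI$: that the within-row product-Ramsey regularization can be performed uniformly across the boundedly many rows a copy of the pattern occupies, leaving a colouring depending only on the ordered choice of rows. This is the same mild bookkeeping that underlies the tree cases (Propositions~\ref{L_s,I trees have modeling property} and~\ref{L_0,I trees have modeling property}), where the corresponding arguments of \cite{Kim_Kim_Scow_2013} and \cite{TAKEUCHI_Tsuboi_2012} carry over verbatim; here it is, if anything, easier, since the array index structure has only two ``levels''.
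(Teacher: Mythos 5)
Your proposal is correct and follows essentially the same route as the paper: the paper likewise regards $\kappa\times\calI$ as a convexly ordered equivalence relation (with function symbols of $L_\calI$ replaced by row-wise relation graphs) and obtains the result by rerunning Lemmas 1.2 and 1.3 of \cite{chernikov2014theories}, replacing the within-row applications of Ramsey's theorem by the Ramsey property of $\Age(\calI)$. Your write-up simply makes explicit the packaging as an index structure $\calI^{\ast}$ and the appeal to Theorem \ref{Scow's theorem}, which the paper leaves implicit.
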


\section{Generalized dividing and $q$-inconsistency}

Generalized dividing is, naturally, a generalization of the classical notion of dividing, replacing the linear order with different kinds of Ramsey index structures.

\begin{definition}\label{dividing defn}  
Suppose $\vphi(x,y)$ is an $L$-formula, $M$ an $L$-structure, $a\in M$ is a tuple of the same length as $y$, and $C\subseteq M$. Then $\vphi(x,a)$ \textit{divides over $C$} if there is a $C$-indiscernible sequence of tuples from $M$, $(a_i)_{i\in\omega}$, with $a_0=a$ such that the set of formulas $\{\vphi(x,a_i)\colon i\in\omega\}$ is inconsistent.
\end{definition}

Generalized dividing is defined in the most straightforward way possible: simply replace the indiscernible sequence in the definition of dividing with a generalized indiscernible.\footnote{Generalized dividing can be seen as a simplified version of the notion of \textit{shearing}, defined in \cite{malliaris_shelah_2021_separation_theorem}. Shearing likewise generalizes dividing to address indiscernibles with richer index structures. However, a number of differences exist between the notions. Most importantly for the contents of this paper, generalized dividing allows for inconsistency to be given by indices with different quantifier-free types, such as the different colors of \textbf{c}. Shearing, by contrast, requires that inconsistency occur among elements indexed by tuples with the same quantifier-free type.}

\begin{definition}
   Suppose $\vphi(x,y)$ is an $L$-formula, $\calI$ is an index structure, $M$ an $L$-structure, $a\in M$ is a tuple of the same length as $y$, and $C\subseteq M$. Then $\vphi(x,a)$ $\calI$-\textit{divides over} $C$ (and for arbitrary $\calI$, \textit{generalized divides over $C$}) if there is a $\calI$-indexed $C$-indiscernible set of tuples from $M$, $(a_i)_{i\in\calI}$, with $a=a_j$ for some $j\in\calI$, such that the set of formulas $\{\vphi(x,a_i)\colon i\in\calI\}$ is inconsistent. 
\end{definition}

When a formula divides, the set of formulas $\{\vphi(x,a_i)\colon i\in\omega\}$ is said to be $k$-inconsistent, for a natural number $k$. The notion of $k$-inconsistency falls out of the assumption of inconsistency. By compactness, some finitely many (say $k$-many) formulas $\{\vphi(x, a_{i_0}),\dotsm, \vphi(x,a_{i_{k-1}})\}$ are inconsistent, where $i_0<\dotsm <i_{k-1}$. Since sequences witnessing dividing are indiscernible, it follows that for any $j_0<\dotsm<j_{k-1}$, $\{\vphi(x, a_{j_0}),\dotsm, \vphi(x,a_{j_{k-1}})\}$ is also inconsistent---hence we get that any $k$ formulas from the sequence are inconsistent, i.e., $k$-inconsistency. 

To address index structures where not all $k$-tuples have the same quantifier-free type, $k$-inconsistency must be generalized. To see how, consider an index structure $\calI$ which expands a linear order. If the set of formulas $\{\vphi(x,a_i)\colon i\in\calI\}$ is inconsistent, a finite piece of it, $\{\vphi(x, a_{i_0}),\dotsm, \vphi(x,a_{i_{k-1}})\}$, must be inconsistent, by compactness. The indexing sequence may, by the modeling property, be taken to be (generalized) indiscernible, in which case any tuple $(j_0,\dotsm, j_{k-1})$ with the same quantifier-free type as $(i_0,\dotsm, i_{k-1})$ must be such that $\{\vphi(x, a_{j_0}),\dotsm, \vphi(x,a_{j_{k-1}})\}$ is inconsistent. Thus, there must be some complete quantifier-free type $q$ such that any tuples satisfying it give inconsistent formulas. This motivates the following definition: 

\begin{definition}{($q$-inconsistent)} Given a formula $\vphi(x,y)$ and a set of tuples $(a_i)_{i\in\calI}$, where each $a_i$ is the appropriate length for the $y$ variable, and a complete quantifier-free $k$-type $q$ in the language of $\calI$, we say that the set of formulas $\{\vphi(x,a_i)\colon i\in\calI\}$ is $q$\textit{-inconsistent} if, whenever $i_0,\dotsm, i_{n-1}\in \calI$ are such that $(i_0,\dotsm, i_{k-1})\models q$, then the set of formulas $$\{\vphi(x, a_{i_0}),\dotsm, \vphi(x,a_{i_{k-1}})\}$$
is inconsistent.
\end{definition}

It is crucial to remember that, for some index structures $\calI$, when there is $q$-inconsistency, there may be an infinite subsequence of $\calI$ indexing a set of consistent formulas. This will often be the case when considering colored linear orders, where $q$ is the type of a red index less than a green index, say, and all of the red-indexed elements, taken together, are consistent, yet any red and any green are inconsistent. This occurs, for instance, in the proof that the random graph has the colored tree property, Proposition \ref{random graph C-TP} below.

Many potential avenues of research spring from the notion of generalized dividing, most of which involve replacing the classical notion with specific generalized index structures. In this vein, we first define a generalization of local character, which in the classical context, is one way of defining simple theories. It is not, as it turns out, a good generalization of that notion to index structures such as \textbf{c}, as we will see below. However, it can be characterized, as usual, with a generalization of dividing chains.

\begin{definition}{}
$\calI$-dividing has \textit{local character} for a theory $T$ when every type does not $\calI$-divide over some subset of its domain of size at most $|T|+\kappa$, where $\kappa$ is the number of quantifier-free types in finitely many variables in $\calI$. 
\end{definition}

\begin{definition}{}
A $\calI$\textit{-dividing chain} for $\vphi(x,y)$ is a sequence $(a_i\colon i<\alpha)$ such that $\{\vphi(x, a_i)\colon i<\alpha\}$ is consistent, and $\vphi(x,a_i)$ $\calI$-divides over $a_{<i}$. If each instance of dividing in the chain is witnessed in a single quantifier-free type $q$ in the index models, we can call this $q$-inconsistent $\calI$-dividing.
\end{definition}

\begin{proposition}{}
$T$ has $\calI$-local character if and only if there is no formula $\vphi(x,y)$, sequence $(a_i\colon i<\omega)$ in the monster model of $T$, and quantifier-free type in the language of $\calI$ which form a $q$-inconsistent $\calI$-dividing chain. 
\begin{proof}
Let $\lambda=|T|+\kappa$.

For one direction, suppose there is a $q$-inconsistent $\calI$-dividing chain $(a_i\colon i<\omega)$. 

First, we note that this can be extended to a $q$-inconsistent $\calI$-dividing chain of length $\alpha$, for any ordinal $\alpha$, by a straightforward compactness argument.

 Since we can extend our dividing chains, we can assume we actually had a $\lambda^+$-length $q$-inconsistent $\calI$-dividing chain $(a_i)_{i<\lambda^+}$. Let $A=\{a_i\colon i<\lambda^+\}$. Then $p=\{\vphi(x,a_i)\colon i<\lambda^+\}$ is a type with domain $A$ (note that the consistency requirement for a dividing chain guarantees that $p$ is in fact a type). However, $p$ $\calI$-divides over any subset of its domain of size $\leq |T|+\kappa$, since the indices of any such subset must be bounded above by some $i<\lambda^+$, and hence $p$ $\calI$-divides over $a_{<i}$, since $\vphi(x,a_i)$ does. Thus we have a failure of local character for $T$.

Suppose local character fails for $T$, i.e. there is a type $p\in S(A)$ which $\calI$-divides over every subset of its domain of size $\leq \lambda$. If $\vphi(x,b)\in p$ $\calI$-divides over $A$, then the constant sequence where $b_i=b$ is an $\calI$-dividing chain for $\vphi(x,y)$.\footnote{Note that Proposition \ref{triviality} shows that there are cases where types $\calI$-divide over their domains.} So we may assume $p$ does not $\calI$-divide over its domain, and thus $|A|\geq \lambda^+$.

Begin by inductively constructing a sequence of formulas with parameters $(\vphi_i(x,b_i)\colon i<\lambda^+)$, each of which is in $p$, such that $\vphi_i(x,b_i)$ $\calI$-divides over $b_{<i}$. Supposing we have constructed the sequence up to stage $i$, we note that $b_{<i}$ is a subset of $A$ of size at most $|T|+\kappa$, and thus $p$ $\calI$-divides over it. Thus $p$ includes a formula $\psi(x, d)$ which $\calI$-divides over $b_{<i}$; simply let $\psi=\vphi_{i+1}$ and $d=b_{i+1}$. Because $|A|\geq \lambda^+$, we can construct a long enough sequence. 

Now, consider the pairs $(\vphi_i,q_i)$ consisting of a formula in the sequence, and the quantifier-free type in the index language witnessing the $\calI$-dividing of $\vphi_i(x,b_i)$ over $b_{<i}$. Since there are $|T|$-many such formulas, and $\kappa$ many such quantifier-free types, yet the list has length $\lambda^+$, we may extract a countable constant subsequence by the pigeonhole principle. This furnishes a formula $\vphi(x,y)$ and quantifier-free type $q$ such that the subsequence of the $(b_i)_{i<\lambda^+}$, which we may call $(a_i)_{i<\omega}$, is the desired $q$-inconsistent $\calI$-dividing chain of length $\omega$ for $\vphi$.
\end{proof}
\end{proposition}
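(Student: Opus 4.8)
The plan is to prove both directions by contraposition, following the classical template that equates simplicity with the local character of dividing. Throughout, set $\lambda = |T| + \kappa$, where $\kappa$ is the number of quantifier-free types in finitely many variables in $\calI$; note $\lambda^+$ is regular.

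For the direction showing a $q$-inconsistent $\calI$-dividing chain forces a failure of $\calI$-local character, I would start from a chain $(a_i : i < \omega)$ and first stretch it to length $\lambda^+$. This is a compactness argument: introduce a constant for each would-be chain element indexed by $\lambda^+$, and write down the partial type asserting that $\{\vphi(x,a_i) : i < \lambda^+\}$ is consistent and that, for each $i$, there is an $\calI$-indexed $a_{<i}$-indiscernible set containing $a_i$ along which $\{\vphi(x,a_j)\}$ is $q$-inconsistent; every finite fragment is satisfied by (a finite piece of) the original $\omega$-chain, using the modeling property of $\calI$ (Scow's Theorem, via the $\calI$-indiscernible witnesses) to realize the indiscernibility clauses. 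Given such a chain $(a_i)_{i<\lambda^+}$, put $A = \{a_i : i < \lambda^+\}$ and $p = \{\vphi(x,a_i) : i < \lambda^+\}$, which is a partial type over $A$ by the consistency clause of a dividing chain. For any $B \subseteq A$ with $|B| \le \lambda$, regularity of $\lambda^+$ bounds the indices occurring in $B$ below some $i < \lambda^+$, so $B \subseteq a_{<i}$; since $\vphi(x,a_i) \in p$ already $\calI$-divides over $a_{<i}$, the type $p$ $\calI$-divides over $B$. Hence $p$ witnesses the failure of $\calI$-local character.

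For the converse, assume $\calI$-local character fails, so there is $p \in S(A)$ that $\calI$-divides over every subset of its domain of size $\le \lambda$. First dispose of the degenerate case: if some $\vphi(x,b) \in p$ already $\calI$-divides over $A$ (which genuinely can happen, cf.\ Proposition \ref{triviality}), the constant sequence $b_i = b$ is an $\calI$-dividing chain, and one checks it is $q$-inconsistent for the $q$ witnessing that instance of dividing. Otherwise $p$ does not $\calI$-divide over its own domain, forcing $|A| > \lambda$. I would then inductively build $(\vphi_i(x,b_i) : i < \lambda^+)$ with each $\vphi_i(x,b_i) \in p$ and $\vphi_i(x,b_i)$ $\calI$-dividing over $b_{<i}$: at stage $i$ the set $b_{<i}$ has size $\le \lambda$, so by hypothesis $p$ $\calI$-divides over it, and some formula of $p$ realizes this. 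Attaching to each $i$ the pair $(\vphi_i, q_i)$, with $q_i$ the quantifier-free type of $\calI$ witnessing the $\calI$-dividing, there are only $|T|\cdot\kappa = \lambda$ possible pairs, so by pigeonhole (and regularity of $\lambda^+$) the sequence has a constant subsequence of length $\lambda^+$; truncating to its first $\omega$ terms and re-indexing yields a $q$-inconsistent $\calI$-dividing chain $(a_i)_{i<\omega}$ for a single $\vphi$ and $q$.

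I expect the main obstacle to be making the first compactness argument fully rigorous: one must be careful that the statement ``$(a_i)_{i<\alpha}$ is a $q$-inconsistent $\calI$-dividing chain'' is expressible as a consistent partial type over constants for the $a_i$, which hinges both on the existence of $\calI$-indexed indiscernible witnesses (the modeling property) and on checking that $q$-inconsistency is preserved under the relevant elementary extensions. A secondary technical point is verifying $q$-inconsistency in the degenerate constant-chain case of the converse.
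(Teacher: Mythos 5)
Your proposal is correct and follows essentially the same route as the paper: stretch the chain to length $\lambda^+$ by compactness and use the resulting type to defeat local character in one direction, and in the other direction handle the degenerate constant-chain case, build a $\lambda^+$-sequence of dividing formulas by transfinite recursion, and extract a constant $(\vphi,q)$ subsequence by pigeonhole. The paper is, if anything, terser than you are about the compactness stretching step, which it dismisses as "straightforward."
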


As a seemingly minor alteration to the classical linear order index model, \textbf{c} was the first generalized index structure with which we investigated generalized local character. However, this notion turns out to be trivially strong for \textbf{c}, and thus fails to carve out any interesting classificatory property of theories. Its triviality may be seen by the following fact.

\begin{proposition}\label{triviality}
    Let $T_\infty$ be the theory of infinite sets, in the language with only $=$. Then $T_\infty$  does not have \textbf{c}-local character.

    \begin{proof}
        Consider the formula $x=y$. From $M\models T_\infty$, select two distinct elements, $a$ and $b$. Define $(a_i)_{i\in\omega}$ to be the constant sequence where $a_i=a$; this gives a \textbf{c}-dividing chain for $x=y$. Clearly the consistency condition is met. For $a_i$, the sequence $a,b,a,b,a,b,\dotsm$ gives a two-colored indiscernible sequence over $a_{<i}$, where odds and evens are given distinct colors. This gives inconsistent formulas for $x=y$. Hence we have an infinite dividing chain, and no \textbf{c}-local character.
    \end{proof}
\end{proposition}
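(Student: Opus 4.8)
The plan is to exploit the fact that over $\textbf{c}$ dividing is so cheap that a single formula divides over its own parameter, so that even a one-element type fails the local-character condition. Work with $T_\infty$ and the formula $\vphi(x,y)$ given by $x=y$. First I would pin down the bound appearing in the definition: the language is $\{=\}$, so $|T_\infty|=\aleph_0$, and $\textbf{c}$ has only countably many quantifier-free types in finitely many variables, so $\lambda:=|T_\infty|+\kappa=\aleph_0$ --- though in fact we only use $\lambda\ge 1$. Fixing $M\models T_\infty$ and distinct $a,b\in M$, the goal is to show that the formula $x=a$ already $\textbf{c}$-divides over $\{a\}$. Granting that, the complete type $p$ over $\{a\}$ containing $x=a$ has domain $\{a\}$, whose only subsets $\emptyset$ and $\{a\}$ both have size $\le\lambda$; since an $\{a\}$-indiscernible set is in particular $\emptyset$-indiscernible, $p$ $\textbf{c}$-divides over both, hence over every admissible subset of its domain, so local character fails. (Equivalently, the same data is a constant $\textbf{c}$-dividing chain $a_i=a$ for $x=y$, and one appeals to the dividing-chain characterization of local character proved above.)

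Next I would build the witnessing sequence. By Remark~\ref{changing order type} we may take $\textbf{c}$ to be $(\omega,<)$ with two colors, $R$ on the even positions and $G$ on the odd positions. Set $a_i=a$ for $i$ even and $a_i=b$ for $i$ odd; then $a=a_0$, so this is a legitimate candidate. The family $\{x=a_i:i\in\omega\}$ is literally $\{x=a\}\cup\{x=b\}$, which is inconsistent since $a\neq b$; more precisely it is $(R<G)$-inconsistent, as any red index below any green index contributes the inconsistent pair $\{x=a,\ x=b\}$.

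The one step deserving genuine care is checking that $(a_i)_{i\in\omega}$, viewed as a $\textbf{c}$-indexed set, is indiscernible over $\{a\}$. This is where the triviality of $T_\infty$ does the work: in $M$ the complete type over $\{a\}$ of a tuple $a_{i_0},\dots,a_{i_{n-1}}$ is determined by its coincidence pattern --- which coordinates are equal to each other and which equal $a$. In our sequence a coordinate equals $a$ exactly when its index is red and equals $b\neq a$ exactly when its index is green, so this pattern depends only on the colors and relative order of $i_0,\dots,i_{n-1}$, that is, only on $\qftp_{\textbf{c}}(i_0,\dots,i_{n-1})$. Hence index tuples with equal quantifier-free types yield tuples with equal types over $\{a\}$, which is precisely $\{a\}$-indiscernibility (a fortiori $\emptyset$-indiscernibility). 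Assembling the pieces, $x=a$ $\textbf{c}$-divides over $\{a\}$, and $T_\infty$ has no type satisfying local character.

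I do not expect a real obstacle: the proposition is conceptual rather than technical, resting entirely on the feature emphasized in the introduction --- that $\textbf{c}$ carries more than one quantifier-free $1$-type --- so that a choice of parameters that is merely constant on each color can already be indiscernible while producing inconsistency. The only genuine point of caution is not conflating ``indexed by $\textbf{c}$'' with ``indexed by the specific dense colored order $\textbf{c}$'', which is handled cleanly by Remark~\ref{changing order type}, letting us use order type $\omega$.
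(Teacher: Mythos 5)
Your proposal is correct and takes essentially the same route as the paper: the paper also works with $x=y$, the constant chain $a_i=a$, and the alternating sequence $a,b,a,b,\dotsc$ colored by parity as the indiscernible set witnessing inconsistency, concluding via the dividing-chain characterization of local character (which you note parenthetically as the alternative to your direct argument). Your explicit verification that the alternating sequence is $\{a\}$-indiscernible --- the type of a tuple in $T_\infty$ is determined by its coincidence pattern, which here depends only on the colors --- is the one detail the paper asserts without proof, and it is handled correctly.
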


The failure of \textbf{c}-local character to define an interesting analog of simplicity led naturally to the investigation of analogs of the tree property. Classically, having local character is equivalent to omitting the tree property. That would suggest that the colored analog of the tree property would also be trivial. But as we shall see, that is not the case.\footnote{Another direction of research which would be natural, having defined generalized dividing, is the investigation of associated notions of forking and independence, particularly for the colored linear order. We hope to pursue this avenue in future research.}

\section{The Colored Tree Property}

The original version of the tree property is defined as follows.

\begin{definition}\label{tree property}
    A formula $\vphi(x,y)$ has the \textit{tree property} if there exist  tuples $(a_\eta)_{\omega^{<\omega}}$ satisfying the following:

    \begin{enumerate}
        \item \textbf{Paths are consistent:}   For each path $\beta\in\omega^{\omega}$, the set of formulas $\{\vphi(x,a_{\beta|_n})\colon n<\omega\}$ is consistent.

        \item \textbf{Siblings are $k$-inconsistent:} There is a natural number $k$ such that for each $\eta\in\omega^{<\omega}$, any $k$ elements of the set $\{\vphi(x,a_{\eta^\smallfrown i})\colon i<\omega\}$ is inconsistent.
    \end{enumerate}
\end{definition}

At first glance, there are (at least) two natural ways to generalize this to trees involving a richer index structure $\calI$, and in particular, to \textbf{c}. Certainly, $k$-inconsistency of siblings must be replaced by $q$-inconsistency. Consequently, the successors of any node in the indexing tree must form a copy of $\calI$. For the consistency condition, one might either ask that all paths are consistent, or just the parts of paths indexed by things with the same quantifier-free types (e.g., same color). It turns out that the latter option is equivalent to the trivial notion of \textbf{c}-local character, so the former is more promising. We arrive at the following definition.

\begin{definition}{($\calI$-Tree Property)}
        For an index structure $\calI$, we say that a formula $\vphi(x,y)$ has the $\calI$-tree property ($\calI$-TP) when there exists a tree of parameters $(a_\eta)_{\eta\in \calI^{<\omega}}$ and a complete quantifier-free $n$-type $q$ in the language of $\calI$, such that:

        \begin{enumerate}
    \item \textbf{Paths are consistent:} For all $\beta\in\calI^{\omega}$, $\{\vphi(x, a_{\beta\mid _i})\colon i<\omega\}$ is consistent.

   \item \textbf{Siblings are $q$-inconsistent:} For every $\eta\in\calI^{<\omega}$, the set of formulas indexed by immediate successors of $\eta$, i.e., $\{\vphi(x,a_{\eta^\smallfrown i_k})\colon k<\omega\}$, are $q$-inconsistent.
   
   %$\qftp_\calI(\eta^\smallfrown i_1\dotsm \eta^\smallfrown i_n)= q$, then $\{\vphi(x,a_{\eta^\smallfrown i_1}),\dotsm, \vphi(x,a_{\eta^\smallfrown i_n})\}$ is inconsistent.

As usual we say that a model and a theory, have $\calI$-TP and N$\calI$-TP.
\end{enumerate}
\end{definition}

By Proposition \ref{L_s,I trees have modeling property}, given an tree of parameters indexed by $\calI^{<\omega}$, we can find an $L_{s,\omega,\calI}$-insiscernible tree locally based on it. Local basedness in this language preserves witnessing $\calI$-TP, so we may always take $\calI$-TP to be witnessed by an $L_{s,\omega,\calI}$-indiscernible tree of parameters. 

We will see below in Theorem \ref{c-TP unstable theorem} that \textbf{c}-TP is not trivially strong, as \textbf{c}-local character is. At the same time, it is genuinely different from TP. This difference is demonstrated by the fact that the Rado random graph has \textbf{c}-TP. The random graph is a structure of interest for many reasons, but one of them  is that it is an unstable theory which is simple, i.e., it omits TP.

\begin{proposition}{}\label{random graph C-TP}
The random graph has $\textbf{c}_2$-TP.
\begin{proof}

Assume, for this proof, that $\textbf{c}_2$ has order type $\omega$, with reds even and greens odd; recall we may assume this index structure, by Remark \ref{changing order type}. The formula witnessing $\textbf{c}_2$-TP is $\vphi(x,yz)=xRy\land\neg xRz$. The tree of parameters witnessing that $\vphi$ has $\textbf{c}_2$-TP will consist of pairs of disconnected nodes. For each $i\in\omega$, fix a pair of distinct, disconnected nodes $c_i, d_i$; these should also be distinct from $c_j,d_j$ for all $j\neq i$. 

Now, define a tree of pairs $(b_\eta)_{\eta\in \textbf{c}_2^{<\omega}}$ as follows: Suppose the tree has been constructed up to level $i-1$, and $\eta$ has length $i-1$. If $n$ is even and red, define $b_{\eta^\smallfrown n}=c_id_i$. If $n$ is odd and green, define $b_{\eta^\smallfrown n}=d_ic_i$.

I claim that $(b_\eta)_{\eta\in \textbf{c}_2^{<\omega}}$ witnesses $\textbf{c}_2$-TP for $\vphi$. If $i$ is red and $j$ is green, $\{\vphi(x,b_{\eta^\smallfrown i}),\vphi(x,b_{\eta^\smallfrown j})\}$ asserts $xRc_i$ and $\neg xRc_i$, hence these are inconsistent. Paths are consistent by the random graph axioms, together with the fact that the $c_i$ and $d_i$ were chosen to be distinct at different levels of the tree. Thus the $\textbf{c}_2$-TP definition is satisfied for $\vphi$.

\end{proof}
\end{proposition}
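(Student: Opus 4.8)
The plan is to exhibit an explicit formula and an explicit tree of parameters, following the standard template by which one shows the random graph is unstable but simple. The natural choice of formula is $\vphi(x,yz) = xRy \land \neg xRz$, the same formula that witnesses the order property for the random graph. The key idea is that, in the colored linear order $\textbf{c}_2$ with colors red and green, the two colors can be made to play the roles of ``$y$'' and ``$z$'': a node indexed by a red point will carry a pair $(c,d)$ in one order, and a node indexed by a green point will carry the same pair $(d,c)$ in the opposite order, so that a red successor and a green successor of the same node demand $xRc$ and $\neg xRc$ simultaneously. This gives $q$-inconsistency for $q$ the quantifier-free type ``red $<$ green'' (equivalently, the $2$-type of one red point and one green point), while keeping sibling sets of a single color entirely consistent.

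The steps, in order: (1) Invoke Remark \ref{changing order type} to replace $\textbf{c}_2$ by a colored linear order of order type $\omega$ with even elements red and odd elements green; this is legitimate since it has the same age. (2) For each level $i < \omega$, fix a pair $c_i, d_i$ of distinct non-adjacent vertices of the random graph, with all these vertices distinct across levels. (3) Build the tree $(b_\eta)_{\eta \in \textbf{c}_2^{<\omega}}$ recursively by level: a node of the form $\eta^\smallfrown n$ with $\eta$ of length $i-1$ gets the pair $c_i d_i$ if $n$ is red and $d_i c_i$ if $n$ is green. (4) Check $q$-inconsistency of siblings: for a red index $m$ and a green index $n$, the formulas $\vphi(x, b_{\eta^\smallfrown m})$ and $\vphi(x, b_{\eta^\smallfrown n})$ assert $xRc_i \land \neg xRd_i$ and $xRd_i \land \neg xRc_i$ respectively, so together they assert $xRc_i$ and $\neg xRc_i$ — inconsistent, and $2$-inconsistent for the type $q$ in question. (5) Check paths are consistent: fix a path $\beta \in \textbf{c}_2^\omega$; the set $\{\vphi(x, b_{\beta\mid_i}) : i < \omega\}$ imposes, at each level $i$, that $x$ be adjacent to one of $\{c_i, d_i\}$ and non-adjacent to the other; since the pairs at distinct levels involve disjoint vertex sets, this is a consistent set of adjacency/non-adjacency requirements on distinct vertices, hence realized in the random graph (and realized by a single vertex, by the extension axioms, after passing to a finite subset and using compactness/saturation of the monster model).

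I do not expect a serious obstacle here — the construction is essentially forced once one sees that the two colors should encode the two coordinates of the order-property formula. The one point requiring a little care is step (5): one must be sure that the consistency condition of $\calI$-TP, which quantifies over \emph{all} paths $\beta \in \textbf{c}_2^\omega$, is genuinely satisfied, and this is exactly where the choice of fresh vertices $c_i, d_i$ at each level is used — without it, a path could revisit a constraint on some $c_i$ with the opposite polarity. It is also worth remarking explicitly, as the paper's discussion after the definition of $q$-inconsistency anticipates, that here the monochromatic sub-collections of siblings are consistent (indeed all the red successors of $\eta$ carry the identical pair), so this is a genuine instance of the phenomenon that $q$-inconsistency need not entail any infinite inconsistent subfamily.
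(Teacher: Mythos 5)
Your proposal is correct and follows essentially the same route as the paper's own proof: the same formula $\vphi(x,yz)=xRy\land\neg xRz$, the same choice of fresh disconnected pairs $c_i,d_i$ at each level, the same red/green swap of coordinates, and the same consistency and $q$-inconsistency checks. Your added remarks on why path-consistency needs the vertices to be fresh at each level, and on the monochromatic siblings being consistent, are accurate elaborations of points the paper states more tersely.
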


That the random graph has $\textbf{c}_2$-TP actually implies that it has every $\textbf{c}_\kappa$-TP. 

\begin{proposition}{}\label{changing the number of colors}
If a formula has $\textbf{c}_m$-TP, then it has $\textbf{c}_n$-TP for any $n>m$, with inconsistency witnessed by the same quantifier-free type $q$. In particular, $\textbf{c}_2$-TP implies any $\textbf{c}_\kappa$-TP. The converse is not necessarily true.

\begin{proof}
    In order to produce a witness for $\textbf{c}_n$-TP, modify the tree witnessing $\textbf{c}_m$-TP by duplicating the tuples in each copy of $\textbf{c}_m$ indexed by the original $m$ colors and assigning them the new colors. Since the same tuples appear, we maintain the consistency of paths. Meanwhile, the new tuples are irrelevant to the $q$-inconsistency, so we have $\textbf{c}_n$-TP.
\end{proof}
\end{proposition}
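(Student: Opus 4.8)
The plan is to transfer a witness for $\textbf{c}_m$-TP to one for $\textbf{c}_n$-TP by precomposing the indexing with a single ``recoloring'' map $f\colon\textbf{c}_n\to\textbf{c}_m$, applied coordinatewise to tree nodes. First I would fix a formula $\vphi(x,y)$, a tree of parameters $(a_\eta)_{\eta\in\textbf{c}_m^{<\omega}}$, and a complete quantifier-free $k$-type $q$ in the language $L_m$ witnessing $\textbf{c}_m$-TP; using Remark \ref{changing order type} I am free to take both $\textbf{c}_m$ and $\textbf{c}_n$ to have whatever order type is convenient. The key object is a weakly order-preserving map $f\colon\textbf{c}_n\to\textbf{c}_m$ whose restriction to the set $S$ of elements of $\textbf{c}_n$ carrying one of the first $m$ colors $c_0,\dots,c_{m-1}$ is a color-preserving order isomorphism onto $\textbf{c}_m$; on $\textbf{c}_n\setminus S$ one may define $f$ arbitrarily, say constantly. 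Such an $f$ exists because all colors of $\textbf{c}_n$ are dense, so the induced structure on $S$ is a countable dense linear order without endpoints in which each of $c_0,\dots,c_{m-1}$ is dense, hence isomorphic to $\textbf{c}_m$.

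Next I would set $a'_\nu:=a_{f^\ast(\nu)}$ for $\nu\in\textbf{c}_n^{<\omega}$, where $f^\ast$ is the coordinatewise action of $f$, i.e. $f^\ast(\nu)(i)=f(\nu(i))$; then $f^\ast$ maps $\textbf{c}_n^{<\omega}$ into $\textbf{c}_m^{<\omega}$, commutes with taking initial segments, and satisfies $f^\ast(\eta^\smallfrown j)=f^\ast(\eta)^\smallfrown f(j)$. For consistency of paths: if $\beta\in\textbf{c}_n^\omega$ then $f^\ast(\beta)\in\textbf{c}_m^\omega$ is a path of the old tree and $a'_{\beta\mid_i}=a_{f^\ast(\beta)\mid_i}$ for all $i$, so $\{\vphi(x,a'_{\beta\mid_i})\colon i<\omega\}$ coincides with a path-set of the old tree and is consistent. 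For $q$-inconsistency of siblings: given $\eta\in\textbf{c}_n^{<\omega}$ and successors $\eta^\smallfrown j_1,\dots,\eta^\smallfrown j_k$ with $(j_1,\dots,j_k)\models q$, all the $j_\ell$ lie in $S$ (since $q$ only mentions colors among $c_0,\dots,c_{m-1}$) and are strictly $<$-increasing and distinct, so $f(j_1)<\dots<f(j_k)$ with the same colors, whence $(f(j_1),\dots,f(j_k))\models q$ in $\textbf{c}_m$; therefore $\{\vphi(x,a'_{\eta^\smallfrown j_\ell})\colon 1\le\ell\le k\}=\{\vphi(x,a_{f^\ast(\eta)^\smallfrown f(j_\ell)})\colon 1\le\ell\le k\}$ is a set of $q$-realizing siblings of the old tree and hence inconsistent. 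This shows $(a'_\nu)_{\nu\in\textbf{c}_n^{<\omega}}$ witnesses $\textbf{c}_n$-TP for $\vphi$ with the very same $q$; taking $m=2$ yields the ``in particular'' clause, and nothing in the argument cares whether $\kappa$ is finite or infinite, since only density of the relevant colors is used.

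Finally, for the failure of the converse I would note that $\textbf{c}_n$-TP cannot in general be upgraded to $\textbf{c}_m$-TP \emph{with the same $q$}, simply because a witnessing $q$ for $\textbf{c}_n$-TP may mention a color $c_\alpha$ with $m\le\alpha<n$, which is not even a symbol of $L_m$; so the refinement ``with the same $q$'' genuinely does not reverse, even though (in light of the characterization $\textbf{c}$-TP $\Leftrightarrow$ instability proved below) the bare implication $\textbf{c}_2$-TP $\Leftrightarrow$ $\textbf{c}_\kappa$-TP does hold at the level of theories. I expect the only real obstacle to be the existence and correctness of the recoloring map $f$ — in particular verifying that the first-$m$-colors reduct of $\textbf{c}_n$ is a copy of $\textbf{c}_m$ (this is exactly where density of the colors is essential) and being careful that $q$, being a type in $L_m$, involves only those colors, so that $q$-inconsistency really does transfer along $f$.
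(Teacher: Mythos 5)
Your proposal is correct and is essentially the paper's argument: the paper "duplicates the tuples indexed by the original colors and assigns them the new colors," which is exactly your pullback along a recoloring map $f\colon\textbf{c}_n\to\textbf{c}_m$ that is a color-preserving isomorphism on the first $m$ colors and arbitrary elsewhere; your version just makes explicit the two points the paper leaves implicit (that the first-$m$-colors reduct of $\textbf{c}_n$ is a copy of $\textbf{c}_m$, and that realizers of $q$ land in that reduct so $q$-inconsistency transfers). Your reading of "the converse fails" as concerning preservation of $q$ is also the sensible one, given that Theorem \ref{c-TP unstable theorem} makes the bare implication reversible.
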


The random graph is not just \textit{a} simple unstable theory, but \textit{the} typical structure with the Independence Property (IP) but not TP. Thus, it follows that all structures with IP have \textbf{c}-TP. The following definition of IP in terms of the \textit{alternating number} is not the most traditional, but will be useful in the proof of Theorem \ref{IP=cTP2 theorem} below. See Lemma 2.7 of \cite{Simon_2015} for a proof that it is equivalent to a more standard definition.

\begin{definition}
    A formula $\vphi(x,y)$ is said to have the \textit{Independence Property} (IP) if there is an indiscernible sequence $(a_i)_{i<\omega}$ such that the following set of formulas is consistent: $$\{\vphi(x,a_i)\colon i\enspace\text{is even}\}\cup\{\neg\vphi(x,a_i)\colon i\enspace\text{is odd}\}.$$
\end{definition}

\begin{corollary}{}
If $T$ has IP, then $T$ has $\textbf{c}$-TP.
\begin{proof}
   By Lemma 2.2 of \cite{Laskowski_Shelah_Karp_complexity_IP}, any theory with IP interprets a graph with the countable random graph as an induced subgraph. The example from the previous proposition may be reproduced in this induced subgraph, witnessing $\textbf{c}_2$-TP, and thus arbitrary \textbf{c}-TP.
\end{proof}
\end{corollary}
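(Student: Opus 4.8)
The plan is to build an explicit witness for $\textbf{c}_2$-TP out of a formula with IP, and then invoke Proposition~\ref{changing the number of colors} to get $\textbf{c}_\kappa$-TP for every $\kappa$. First I would reduce: since $T$ has IP, some formula $\varphi(x,y)$ has IP, and by a routine argument (indiscernibility together with the alternating-number characterization, cf.\ Lemma~2.7 of \cite{Simon_2015}, plus compactness) I may fix an indiscernible sequence $(a_i)_{i<\omega}$ with the \emph{full} independence property for $\varphi$: for every $S\subseteq\omega$ the set $\{\varphi(x,a_i):i\in S\}\cup\{\neg\varphi(x,a_i):i\notin S\}$ is consistent.

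Next I would construct the tree, mimicking the proof of Proposition~\ref{random graph C-TP}. By Remark~\ref{changing order type} take $\textbf{c}_2$ to have order type $\omega$ with even nodes red and odd nodes green. Put $\psi(x,y_0y_1):=\varphi(x,y_0)\wedge\neg\varphi(x,y_1)$ and let $q$ be the quantifier-free $2$-type ``$R<G$''. Reserve for level $i$ the disjoint pair $a_{2i},a_{2i+1}$, and for $\eta\in\textbf{c}_2^{<\omega}$ of length $i$ with immediate successor index $n$, set $b_{\eta^\smallfrown n}:=a_{2i}a_{2i+1}$ if $n$ is red and $b_{\eta^\smallfrown n}:=a_{2i+1}a_{2i}$ if $n$ is green; note $b_{\eta^\smallfrown n}$ depends only on the length of $\eta$ and the color of $n$.

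Then I would verify the two clauses. For siblings: the immediate successors of a fixed $\eta$ of length $i$ form a copy of $\textbf{c}_2$, and if $(m,n)\models q$ (so $m$ red, $n$ green, $m<n$) then $\{\psi(x,b_{\eta^\smallfrown m}),\psi(x,b_{\eta^\smallfrown n})\}$ contains both $\varphi(x,a_{2i})$ and $\neg\varphi(x,a_{2i})$, hence is inconsistent, so siblings are $q$-inconsistent. For paths: a branch $\beta\in\textbf{c}_2^{\omega}$ contributes at level $i$ either $\varphi(x,a_{2i})\wedge\neg\varphi(x,a_{2i+1})$ or $\varphi(x,a_{2i+1})\wedge\neg\varphi(x,a_{2i})$ according to the color of $\beta(i)$, and the union over all $i$ is exactly a full-independence pattern on $(a_k)_{k<\omega}$ for a suitable $S\subseteq\omega$, hence consistent by the previous paragraph. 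This gives $\textbf{c}_2$-TP for $\psi$, and Proposition~\ref{changing the number of colors} yields $\textbf{c}_\kappa$-TP.

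The only real content is the reduction from a single alternating instance of IP to full independence, which is standard; and in fact one can bypass even the explicit construction by invoking Lemma~2.2 of \cite{Laskowski_Shelah_Karp_complexity_IP} to find the countable random graph as an induced subgraph of a definable graph and then quoting Proposition~\ref{random graph C-TP} verbatim inside that interpretation. I expect no genuine obstacle here.
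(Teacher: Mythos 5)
Your proposal is correct, but your primary argument takes a genuinely different route from the paper. The paper's proof is a one-liner that cites Lemma 2.2 of \cite{Laskowski_Shelah_Karp_complexity_IP} to interpret a graph containing the countable random graph as an induced subgraph, and then transports the witness from Proposition \ref{random graph C-TP} through that interpretation (this is the fallback you mention in your last paragraph). Your main construction instead abstracts the random graph example directly: you upgrade a single alternating instance of IP to full independence along an indiscernible sequence (standard, via Lemma 2.7 of \cite{Simon_2015} and compactness), and then run the exact combinatorial pattern of Proposition \ref{random graph C-TP} with $\psi(x,y_0y_1)=\varphi(x,y_0)\wedge\neg\varphi(x,y_1)$ in place of $xRy_0\wedge\neg xRy_1$, using a fresh pair $a_{2i},a_{2i+1}$ at each level. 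The verification of both clauses is right: $q$-inconsistency of siblings comes from the clash $\varphi(x,a_{2i})$ versus $\neg\varphi(x,a_{2i})$, and each branch assembles a single full-independence pattern, hence is consistent. What your route buys is self-containment (no interpretation machinery) and a sharper, semi-local conclusion: a Boolean combination of two instances of $\varphi$ itself witnesses $\textbf{c}_2$-TP, in the spirit of the ``moreover'' clauses of Theorems \ref{TP1=cTP1} and \ref{IP=cTP2 theorem}. What the paper's route buys is brevity and uniformity with the rest of the random-graph-based arguments. The only cosmetic loose end in your write-up is the root node $b_\emptyset$, which you should fix arbitrarily (the paper's own construction has the same harmless gap), and you should note, as the paper implicitly does, that the witnessing formula is not $\varphi$ but a conjunction involving $\neg\varphi$ --- which is fine since the corollary is a statement about the theory $T$.
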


\begin{proposition}{}
 If $T$ has TP, then $T$ has $\textbf{c}$-TP.

\begin{proof}
    This is an extension of Proposition \ref{changing the number of colors} to the case of one color. Any witness to TP is a witness to $\textbf{c}$-TP; just color siblings arbitrarily. If $T$ has $k$-TP, the quantifier-free type $q$ witnessing inconsistency may be taken to be that of any $k$ elements.
\end{proof}
\end{proposition}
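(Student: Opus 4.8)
The plan is to observe that this is the easiest of the implications in this section: we simply take a witness to TP and reinterpret it as a witness to $\textbf{c}$-TP by coloring the successors arbitrarily. First I would fix a formula $\vphi(x,y)$ with TP, witnessed by $(a_\eta)_{\eta\in\omega^{<\omega}}$ and a natural number $k$ such that paths are consistent and any $k$ siblings are inconsistent. The key point is that $\omega^{<\omega}$, with the $k$-inconsistency of siblings, is already ``almost'' a $\textbf{c}_\kappa$-TP witness — what is missing is that the successor set of each node must be a copy of $\textbf{c}_\kappa$ (i.e.\ a colored linear order of the appropriate age) rather than merely a copy of $\omega$.

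The second step is to supply the coloring. By Remark \ref{changing order type}, we may take $\textbf{c}_\kappa$ to have any convenient order type with the correct age; I would take it to be $\omega$ (or $\mathbb{Q}$) with a coloring in which every color is dense, so that each color class is infinite. Then build the tree $(b_\eta)_{\eta\in\textbf{c}_\kappa^{<\omega}}$ recursively: at each node $\eta$ already placed, its successors are indexed by a copy of $\textbf{c}_\kappa$; identify the underlying linear order of that copy order-isomorphically with the $\omega$-indexed successors of the corresponding node in the original TP-tree, and set $b_{\eta^\smallfrown i}$ equal to the corresponding $a$-tuple, ignoring the color. Since $\textbf{c}_\kappa$ and $\omega$ have the same underlying order type here, this is literally the same tree of tuples, just with colors painted on; paths are unchanged, hence still consistent. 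For the $q$-inconsistency condition, I would take $q$ to be the quantifier-free type (in the language $L_\kappa$) of any $k$ elements in increasing order with any fixed choice of colors — say all the same color — so that $(i_0,\dots,i_{k-1})\models q$ implies in particular $i_0<\dots<i_{k-1}$, and then the original $k$-inconsistency of $\{a_{\eta^\smallfrown i_0},\dots,a_{\eta^\smallfrown i_{k-1}}\}$ gives $q$-inconsistency of the corresponding $\vphi$-formulas. One only needs that infinitely many successors of $\eta$ receive that color, which holds since colors are dense.

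There is essentially no obstacle here; the only thing to be mildly careful about is the bookkeeping that $\textbf{c}_\kappa$ can be realized with order type $\omega$ and a coloring making each color infinite (so that a monochromatic increasing $k$-tuple exists among the successors of each node), which is exactly what Remark \ref{changing order type} and the density of colors in the Fra\"iss\'e limit provide. I would state the proof in essentially the compressed form already in the excerpt: any witness to $k$-TP is a witness to $\textbf{c}$-TP once siblings are colored arbitrarily, with $q$ taken to be the type of $k$ increasing like-colored elements.
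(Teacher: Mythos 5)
Your proposal is correct and is essentially the paper's own argument, just written out in more detail: color the TP-witnessing tree arbitrarily (using Remark \ref{changing order type} to realize $\textbf{c}$ on the order type $\omega$), keep the same tuples so paths remain consistent, and take $q$ to be the quantifier-free type of some $k$ siblings, which is automatically $q$-inconsistent since \emph{all} $k$-element sets of siblings are inconsistent. The extra care about each color being realized infinitely often among successors is a reasonable (if minor) point the paper leaves implicit.
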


We have now seen that both IP and TP theories have \textbf{c}-TP. In proving that all unstable theories have \textbf{c}-TP, we make use of Shelah's Theorem II.4.7 from \cite{shelah_classification_1990}, which implies that all unstable theories take one of these two forms.

\begin{theorem}{(Shelah's First Dichotomy)}\label{first dichotomy}
    Suppose $T$ is an unstable theory. Then $T$ has IP, SOP, or both.
\end{theorem}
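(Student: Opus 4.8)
This is Shelah's II.4.7, and I would prove it by reducing to the order property and then splitting into the two cases that produce IP and SOP. Since $T$ is unstable, the order property holds for some formula (Shelah's characterization of stability), and by the usual reduction---pairing the two variable sorts into a single sort and applying Ramsey's theorem and compactness---we may fix an indiscernible sequence $(a_i)_{i\in\bbQ}$ of tuples with $\models\vphi(a_i;a_j)$ iff $i<j$, where $|x|=|y|=|a_i|$. From now on, indiscernibility lets us promote any statement about one increasing tuple to all increasing tuples of that length.

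Attach to each parameter $b$ its two \emph{traces} $\{i\in\bbQ:\models\vphi(b;a_i)\}$ and $\{i\in\bbQ:\models\vphi(a_i;b)\}$. Say $\vphi$ has \emph{long oscillations} in a given orientation if for every $n$ some parameter meets and misses $2n$ of the $a_i$ alternately along an increasing tuple. If this holds, then by indiscernibility every increasing $2n$-tuple carries such a witness, so by compactness the alternating partial type $\{\vphi(x;a_i):i\text{ even}\}\cup\{\neg\vphi(x;a_i):i\text{ odd}\}$ over a reindexed subsequence is consistent---this is IP for $\vphi$ in the alternation-number form, so $T$ has IP; likewise for the opposite orientation.

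Otherwise, oscillation lengths in both orientations are bounded by a fixed $N$, and the combinatorial fact that a subset of a linear order with fewer than $N$ alternations along every increasing tuple is a union of at most $N$ convex pieces shows every trace is a union of $\le N$ intervals of $\bbQ$. I would then extract SOP: the sequence already gives a strictly increasing chain of definable sets \emph{on itself}---e.g. the derived formula $\psi(x_0x_1;y):=\vphi(x_0;y)\wedge\neg\vphi(x_1;y)$ has $\psi(a_0a_j;\bbM)$ tracing the interval $(0,j]$, strictly increasing in $j$---and I would argue these inclusions must hold in the full monster: a failure $\psi(a_0a_j;b)\wedge\neg\psi(a_0a_{j'};b)$ for $0<j<j'$ forces a non-convex trace for $b$, which upon iteration, using indiscernibility, rebuilds arbitrarily long oscillations, contradicting the bounded case. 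That yields a formula $\theta(\bar x;\bar y)$ and tuples with $\theta(\bar a_j;\bbM)$ strictly $\subseteq$-increasing, i.e. SOP.

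The hard part is exactly this last extraction: promoting ``all traces are unions of boundedly many intervals'' to a genuine \emph{definable} strict order carrying an infinite chain. The risk is that some parameter's trace is a complicated union of intervals spoiling the naive chain among the sets $\psi(a_0a_j;\bbM)$; the remedy is careful bookkeeping of the $\le N$ interval endpoints (``cuts'') that each parameter determines, arranged so that if no chain can be read off then ever-longer oscillations can be built---returning us to the IP case. The remaining ingredients (the reduction to OP, the compactness argument for IP, and the convexity lemma) are routine.
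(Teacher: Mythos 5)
The paper does not actually prove this statement: it is quoted as Shelah's Theorem II.4.7 of \cite{shelah_classification_1990} and used as a black box, so there is no in-paper argument to compare against. Judged on its own terms, your reconstruction follows the standard route (instability $\Rightarrow$ OP on an indiscernible sequence; unbounded alternation $\Rightarrow$ IP; bounded alternation $\Rightarrow$ SOP), and the first two stages are correct: the reduction to a single indiscernible sequence with $\vphi(a_i;a_j)\iff i<j$ is routine, and unbounded alternation plus indiscernibility and compactness does give IP in exactly the alternation-number form this paper uses as its definition.

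The gap is where you say it is, and it is not closed by the remedy you sketch. A single failure $\psi(a_0a_j;b)\wedge\neg\psi(a_0a_{j'};b)$ with $0<j<j'$ only tells you that the trace of $b$ contains $0$ and $j'$ but omits $j$ --- a pattern with one dip, i.e.\ about three alternations, which is perfectly compatible with any alternation bound $N\ge 3$. Indiscernibility then produces such a witness $b$ for \emph{every} increasing triple, but these are \emph{different} parameters; nothing in the sketch explains how to amalgamate many one-dip witnesses into a \emph{single} parameter with more than $N$ alternations, and there is no generic compactness trick that does this (the alternating type for a single $x$ is exactly the IP type, whose consistency is what you are trying to rule out). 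This amalgamation is the actual content of II.4.7, and the standard proofs do not attempt the naive chain $\psi(a_0a_j;\bbM)$ at all: they either (Shelah) inductively build formulas $\vphi_n(\bar x;\bar y_n)$ whose positive sets are strictly increasing by construction, trading a failure at each stage for one more guaranteed alternation, or (the modern NIP-book route) use bounded alternation to assign to each parameter a bounded set of cuts in the sequence and define the order on (tuples coding) cuts. So the skeleton is right, but the step ``bounded alternation $\Rightarrow$ SOP'' is asserted rather than proved, and that is precisely the theorem.
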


\begin{corollary}{}
If $T$ is unstable, then $T$ has $\textbf{c}$-TP.
\begin{proof}
    By Shelah's First Dichotomy, any unstable theory is either IP or SOP (hence TP); in either case, such a theory has $\textbf{c}$-TP.
\end{proof}
\end{corollary}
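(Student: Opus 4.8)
The plan is to combine Shelah's First Dichotomy (Theorem \ref{first dichotomy}) with the two implications already in hand: that IP implies $\textbf{c}$-TP (the preceding corollary) and that TP implies $\textbf{c}$-TP (the preceding proposition). By Theorem \ref{first dichotomy}, every unstable theory $T$ has IP, SOP, or both; in particular it satisfies at least one of IP and SOP, so it suffices to handle these two cases.

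First I would dispatch the case where $T$ has IP: here the preceding corollary applies verbatim and yields $\textbf{c}$-TP, with no additional work. Next, in the case where $T$ has SOP, I would invoke the classical fact (due to Shelah) that SOP implies TP — given a formula with the strict order property one constructs, in the standard way, a tree of parameters along a linear order whose paths are consistent and whose siblings are inconsistent, witnessing TP. Having reduced to TP, the Proposition ``if $T$ has TP, then $T$ has $\textbf{c}$-TP'' completes this case.

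Since the two cases are exhaustive, the corollary follows. I do not anticipate any genuine obstacle: the statement is essentially a bookkeeping consequence of results established earlier in the paper, and the only ingredient not proved here is the standard implication SOP $\Rightarrow$ TP, which I would simply cite (or recall in a single line).
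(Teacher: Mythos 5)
Your proposal is correct and is exactly the paper's argument: apply Shelah's First Dichotomy, handle the IP case via the preceding corollary, and handle the SOP case via the standard implication SOP $\Rightarrow$ TP followed by the proposition that TP implies $\textbf{c}$-TP. Nothing further is needed.
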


In fact, \textbf{c}-TP also implies instability, completing a characterization of this classical notion using the generalized tree property. Just as TP can be taken to be witnessed by an $L_s$-indiscernible tree, one can likewise take an instance of $\calI$-TP to be $L_{s,\kappa,\textbf{c}}$-indiscernible, a fact we will make ready use of in the following proof.

\begin{theorem}\label{c-TP unstable theorem}
    \textbf{c}-TP theories are unstable. Moreover, if $\vphi(x,y)$ is a formula with \textbf{c}-TP, then it is an unstable formula.
\end{theorem}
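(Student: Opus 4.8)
The plan is to combine $L_{s,\omega,\textbf{c}}$-indiscernible witnesses with a type-counting argument, the clean case being when the inconsistency type is a $2$-type; the general case reduces to it by passing to a conjunction of instances of $\vphi$.

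First I would fix an $L_{s,\omega,\textbf{c}}$-indiscernible tree $(a_\eta)_{\eta\in\textbf{c}^{<\omega}}$ witnessing $\textbf{c}$-TP for $\vphi$, with siblings $q$-inconsistent for a quantifier-free $k$-type $q$; note $k\geq 2$, since a $1$-type would force $\vphi(x,a_i)$ itself inconsistent, contradicting consistency of paths. The main step is the case $k=2$. Here I build a level-preserving binary subtree $T'\subseteq\textbf{c}^{<\omega}$, isomorphic to $2^{<\omega}$, in which each node $\eta$ is given two successors $\eta^\smallfrown i,\eta^\smallfrown j$ that are still siblings in $\textbf{c}^{<\omega}$ and chosen so that $(i,j)\models q$ --- possible because $q$ just prescribes two colours in a fixed order and every colour is dense in $\textbf{c}$. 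Then $\{\vphi(x,a_{\eta^\smallfrown i}),\vphi(x,a_{\eta^\smallfrown j})\}$ is inconsistent by $q$-inconsistency of siblings. Each branch $\beta\in 2^\omega$ determines a path through $T'$, hence an element of $\textbf{c}^\omega$, so $p_\beta=\{\vphi(x,a_{\beta\mid_m})\colon m<\omega\}$ is consistent; choose $c_\beta\models p_\beta$. If $\beta\neq\beta'$ first differ at the $T'$-node $\eta$, with $\beta$ going to $\eta^\smallfrown i$ and $\beta'$ to $\eta^\smallfrown j$, then $c_\beta\models\vphi(x,a_{\eta^\smallfrown i})$ and $c_{\beta'}\models\vphi(x,a_{\eta^\smallfrown j})$, and since these two formulas are inconsistent we get $\neg\vphi(x,a_{\eta^\smallfrown j})\in\tp_\vphi(c_\beta/A)$ while $\vphi(x,a_{\eta^\smallfrown j})\in\tp_\vphi(c_{\beta'}/A)$, where $A=\{a_\eta\colon\eta\in\textbf{c}^{<\omega}\}$. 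Thus $\beta\mapsto\tp_\vphi(c_\beta/A)$ is injective; as $\textbf{c}$ is countable, $A$ is countable, so there are at least $2^{\aleph_0}>\aleph_0$ complete $\vphi$-types over $A$, and by the standard local counting characterization of stability ($\vphi$ is stable iff it has at most $\aleph_0$ complete $\vphi$-types over any countable set) $\vphi$ is unstable.

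For $k>2$ I would first reduce to a $2$-type. Fix two consecutive positions of $q$, say positions $\ell,\ell+1$. In each copy of $\textbf{c}$ of siblings of a node $\eta$, designate $k-2$ auxiliary nodes realizing the other positions of $q$, placed to the far left and far right, and let the remaining ``middle interval'' (still of the same age as $\textbf{c}$) carry the two moving positions. With $\vphi^{*}(x;\bar y)$ the conjunction of the $k$ corresponding instances of $\vphi$, any two suitably coloured siblings in the middle interval, together with the frozen auxiliaries, realize $q$, so their $\vphi^{*}$-instances form an inconsistent pair; this is exactly the $2$-type configuration for $\vphi^{*}$. Running the $k=2$ argument on $\vphi^{*}$ shows $\vphi^{*}$ is unstable, and since Boolean combinations of a stable formula are stable, $\vphi$ is unstable. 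The ``moreover'' clause (that every $\textbf{c}$-TP theory is unstable) follows, since such a theory has an unstable formula.

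I expect the reduction step to be the main obstacle: a path in the thickened tree corresponds to an old path together with the frozen auxiliary siblings at every level, and the consistency of such a ``path plus siblings at each level'' configuration is not automatic from $q$-inconsistency alone. This should be addressed by choosing the auxiliary nodes with care using the modeling property / $L_{s,\omega,\textbf{c}}$-indiscernibility, so that consistency of the relevant configuration, once arranged, is inherited by all configurations of the same quantifier-free type. An alternative is to build a half-graph directly from one path and its off-path siblings, but there the truth values of $\vphi$ on the ``upper triangle'' of the configuration are not controlled by the construction, so one would still need an extra argument --- e.g.\ extracting an indiscernible sequence and ruling out the degenerate ``equality'' pattern --- to obtain the order property.
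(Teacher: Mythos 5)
Your $k=2$ argument is correct and is genuinely different from the paper's: you extract a binary subtree whose two designated children at each node realize $q$ and are therefore pairwise inconsistent, and then count $2^{\aleph_0}$ complete $\vphi$-types over a countable parameter set. The paper instead argues by contradiction from stability for arbitrary $q$ at once: it Skolemizes, takes a tall $L_{s,\kappa,\textbf{c}}$-indiscernible witness, builds a single path $(\xi_n)$ cycling through the colors of $q$ while pigeonholing levels so that all same-colored extensions of $\xi_n$ have the same type over $\{a_{\xi_i}\colon i\le n\}$, and then uses definability of the $\vphi$-type of that path over finitely many parameters to conclude that \emph{all} children of some node satisfy the defining formula $d_\vphi$, so that the full set of sibling instances is consistent, contradicting $q$-inconsistency. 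That argument never needs to reduce the arity of $q$.

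The gap in your proposal is exactly the reduction from a $k$-type to a $2$-type, and it is not repairable by the means you suggest. In your thickened tree, $\vphi^{*}(x,b_{\eta,t})$ asserts $\vphi$ of the path node \emph{and} of the $k-2$ frozen auxiliary siblings at that level, so consistency of a branch requires consistency of a ``path together with $k-2$ designated off-path siblings at every level.'' The definition of \textbf{c}-TP guarantees only that paths are consistent and that $q$-tuples of siblings are inconsistent; it says nothing about the consistency of any configuration mixing a path with siblings, and indeed nothing guarantees that even a single instance $\vphi^{*}(x,b_{\eta,t})$ (a $(k-1)$-tuple of siblings) is consistent. $L_{s,\omega,\textbf{c}}$-indiscernibility cannot close this gap: it propagates consistency between configurations of the same quantifier-free type, but you have no consistent configuration of the relevant type to start from. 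This is precisely why the paper's conjunction tricks for \textbf{c}-$\TPi$ (condition (b) in Theorem \ref{TP1=cTP1}) and for \textbf{c}-$\TPii$ (the ``blocks of $K$ consecutive rows'' in Theorem \ref{IP=cTP2 theorem}) bundle elements lying \emph{along} a path or column, never across an inconsistent sibling set; for TP the inconsistency lives exactly among siblings, so the horizontal conjunction is the one move that is not available. To complete your approach you would need either a genuinely new argument for the reduction or, as in the paper, an argument (such as definability of types) that handles arbitrary $q$ directly.
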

\begin{proof}
    Suppose $\vphi(x,y)$ is a \textbf{c}-TP formula. Towards a contradiction, suppose $\vphi$ is stable. 
    
    Let $\bbM^{Sk}\models T^{Sk}$ be a Skolemization of the monster model in language $L^{Sk}$. The original witness to \textbf{c}-TP is still a witness after Skolemizing. By compactness, we can take the tree witnessing \textbf{c}-TP to be of height $\kappa>2^{|L^{Sk}|+\aleph_0}$. Since $L_{s,\kappa, \textbf{c}}$-trees have the modeling property, we may take \textbf{c}-TP to be witnessed by an $L_{s,\kappa, \textbf{c}}$-indiscernible tree in $\bbM^{Sk}$.  Say that $A=(a_{\eta})_{\eta\in \textbf{c}^{<\kappa}}$ is such an indiscernible witness.

    Suppose the quantifier-free type $q$ for which siblings are $q$-inconsistent mentions precisely the distinct colors $c_0, c_1,\dotsm, c_{m-1}$, where $m> 1$ (otherwise we have one-color \textbf{c}-TP, which is just ordinary TP, hence $\vphi$ is unstable.) Suppose that $q=c_{i_0}<c_{i_1}<\dotsm <c_{{i_{j-1}}}$, where the $i_0,\dots, i_{j-1}$ may or may not be distinct.
    
    We inductively define a sequence of trees $(\textbf{T}_n)_{n<\omega}$, along with a  sequence of tree elements $(\xi_n)_{n<\omega}$, which satisfy the following:

    \begin{enumerate}[label=(\alph*)]
        \item $\textbf{T}_n$ is a subtree of $\textbf{c}^{<\kappa}$;

        \item $\xi_n\in \textbf{T}_n$;

        \item $\textbf{T}_n$ has $\kappa$-many levels;
        
        \item $\xi_n\trianglelefteq \xi_{n+1}$, and $\xi_n\neq \xi_{n+1}$;
    
        \item The $\xi_n$ rotate through the colors $c_{i_0},\dotsm, c_{i_{j-1}}$, i.e., $\xi_n$ has color $c_{i_\ell}$ if, and only if, $$n\mod j=\ell;$$

        \item If $\xi_{n}\trianglelefteq\eta,\nu\in\textbf{T}_{n+1}$  and $\eta,\nu$ have the same color, then  $$\tp_{L^{Sk}}(a_\eta/\{a_{\xi_i}\colon i\leq n\})=\tp_{L^{Sk}}(a_\nu/\{a_{\xi_i}\colon i\leq n\}).$$
    \end{enumerate}

    Take $\xi_0$ to be an arbitrary element of $\textbf{c}^{<\kappa}$ of color $c_{i_0}$, and let $\textbf{T}_0=\{\eta\in\textbf{c}^{<\kappa}\colon \xi_0\trianglelefteq\eta\}$.
    
    %Note first that because $(a_\eta)_{\eta\in\textbf{c}^{<\kappa}}$ is $s_{\textbf{c},\kappa}$-indscernible, and all red (respectively, green) elements at a given level of the tree have the same $s_\textbf{c}$-quantifier-free type, they index elements that have the same type over their mutual $\trianglelefteq$-predecessors. Thus we may justifiably write $\tp(\text{red,green}_\alpha/a_{\xi_0})$ for the type over $a_{\xi_0}$ of any pair of elements from level $\alpha$ in which the first element is indexed by a red index, the second by a green index. %since $\emptyset\trianglelefteq \eta$ for all $\eta\in \omega^{<\kappa}$. 
   % Now, by counting $L^{Sk}$-formulas over ${a_{\xi_0}}$, note that there are at most $2^{|L^{Sk}|+\aleph_0}$-many types of finite tuples over $a_{\xi_0}$. So in particular, there are $<\kappa$ many types which are equal to some $\tp(\text{red}_\alpha\text{green}_\alpha/a_{\xi_0})$. But since the tree has height $\kappa$, by the pigeonhole principle there must be some type $p_0\in S(a_{\xi_0})$ and some $X_0\subseteq \kappa$ with $|X_0|=\kappa$ such that if $\alpha\in X_0$, then $\tp(\text{red}_\alpha\text{green}_\alpha/c_0)=p_0$. Then, let $\textbf{T}_0$ be the \textit{restriction} of $\textbf{c}^{<\kappa}$ to the levels in $X_0$; i.e., let $$\textbf{T}_0=\{\eta\in \textbf{c}^{<\kappa}\colon \min(\dom(\eta))\in X_0,\enspace\text{and}\enspace \beta \in\dom(\eta)\setminus X_0\enspace\text{implies}\enspace \eta(\beta)=0\}.$$
    %Then observe that all of the red-indexed (resp. green-indexed) elements of $\textbf{T}_0$ have the same type over $a_{\xi_0}$.

    Now, suppose we have chosen $\xi_0,\dotsm, \xi_{n-1}$ and $\textbf{T}_0,\dotsm, \textbf{T}_{n-1}$ satisfying (a-f). %Suppose $\xi_{n-1}$ has color $c_{i_\ell}$.

    Suppose $\xi_{n-1}\triangleleft\eta,\nu\in\textbf{T}_{n-1}$ are two indices of the same color, say $c_t$, at the same level of $\textbf{c}^{<\kappa}$ tree, say level $\alpha$. Then observe that $$\qftp_{L_{s_\textbf{c}}}(\eta,\xi_0,\dotsm, \xi_{n-1})=\qftp_{L_{s_\textbf{c}}}(\nu,\xi_0,\dotsm, \xi_{n-1}).$$
    Thus by the $s_\textbf{c}$ indiscernibility of $A$, we have that 
    $$\tp_{L^{Sk}}(a_\eta/\{a_{\xi_i}\colon i<n\})=\tp_{L^{Sk}}(a_\nu/\{a_{\xi_i}\colon i<n\}).$$
    So there is a single ${L^{Sk}}$-type over $\{a_{\xi_i}\colon i<n\}$ associated to the color $c_t$ and the level $\alpha$, call it $r_{t,\alpha}$. Moreover, there is an $m$-tuple of types associated to all of the colors at level $\alpha$, $$(r_{0,\alpha},r_{1,\alpha},\dotsm, r_{m-1,\alpha}).$$

    By counting, observe that there are at most $2^{|L^{Sk}|+\aleph_0}$-many such $m$-tuples over $\{a_{\xi_i}\colon i<n\}$. Since, by (c), $\textbf{T}_{n-1}$ has height $\kappa$, we may apply the Pigeonhole Principle to find a single $m$-tuple of color types, $(R_{0,n},R_{1,n},\dotsm, R_{m-1,n})$
     and $X_n\subseteq \kappa$ with $|X_n|=\kappa$ such that if $\alpha\in X_n$, then $$(r_{0,\alpha},r_{1,\alpha},\dotsm, r_{m-1,\alpha})=(R_{0,n},R_{1,n},\dotsm, R_{m-1,n}).$$
     Then, let $\textbf{T}_n$ be the restriction of $\textbf{T}_{n-1}$ to extensions of $\xi_{n-1}$ in the levels in $X_n$, i.e, let
     $$\textbf{T}_n=\{\eta\in \textbf{T}_{n-1}\colon \xi_{n-1}\trianglelefteq\eta,\enspace\text{and}\enspace length(\eta)\in X_n,\enspace\text{and if}\enspace length(\xi_{n-1})<\beta \in\dom(\eta)\setminus X_n,\enspace\text{then}\enspace \eta(\beta)=0\}.$$
     Choose $\xi_n\in\textbf{T}_n$ of color $c_{i_\ell}$, where $n\mod j=\ell$.

    It is immediate that $\textbf{T}_n$ and $\xi_n$ satisfy (a) and (b), and (e). (c) holds because $\textbf{T}_n$ is the restriction of a height $\kappa$ tree, $\textbf{T}_{n-1}$, to the levels in $X_n$, a set of size $\kappa$. The inequality condition of (d) holds because $\xi_{n-1}$ and $\xi_n$ have different colors. Finally, (f) holds because of the Pigeonhole argument: two elements of $\textbf{T}_n$ of the same color $c_\ell$ share the type $R_{\ell,n}$ over $\{a_{\xi_i}\colon i<n\}$. So the induction continues.

     Now, consider the partial type $\{\vphi(x, a_{\xi_n})\colon n< \omega\}$. Since (d) implies that it is a subset of the partial type determined by a particular path through $A$, which witnesses \textbf{c}-TP for $\vphi$, we must have that this partial type is consistent. Let $p(x)$ be a complete $\vphi$-type over  $M=Sk(\{a_{\xi_n}\colon n<\omega\})$ extending this partial type. %Let $M'=M|_L\models T$, and $p'(x)=p|_L$. 
    Since we assumed that $\vphi$ was stable, we must have an $M$ formula $d_\vphi(y)$ such that $\vphi(x,c)\in p(x)$ if and only if $\bbM\models d_\vphi(c)$. 

    Since $d_\vphi$ is a formula over $M=Sk(\{a_{\xi_i}\colon i<\omega\})$, there must be some $N<\omega$ such that all of the parameters appearing in $d_\vphi$ come from $Sk(\{a_{\xi_i}\colon i<N\})$.  So there is some $L^{Sk}$-formula $d'_\vphi(y,z_0,\dotsm, z_{N-1})$ over $\emptyset$ such that, for any $c\in|\bbM|$, we have $\bbM\models d_\vphi(c)$ if, and only if, $\bbM^{Sk}\models d'_\vphi(c,a_{\xi_0},\dotsm, a_{\xi_{N-1}})$.
    
    Consider an arbitrary immediate successor $\eta$ of $\xi_{N-1}$; I will argue that we have $\bbM\models d_\vphi (a_\eta)$. Suppose $\eta$ has color $c_\ell$. Consider $\xi_{N'}$ for $N'\geq N$, with $\ell=N'\mod j$, so $\xi_{N'}$ has color $c_\ell$. Since $p'(x)$ extends $\{\vphi(x, a_{\xi_n})\colon n<\omega\}$, which contains $\vphi(x, a_{\xi_{N'}})$, we have $\bbM^{Sk}\models d'_\vphi(a_{\xi_{N'}},a_{\xi_0},\dotsm, a_{i_{N-1}})$. Since $\xi_{N-1}\trianglelefteq \eta, \xi_{N'}\in \textbf{T}_N$, and $\eta$ and $\xi_{N'}$ share color $c_\ell$, (f) implies that $$\tp_{L^{Sk}}(a_{\xi_{N'}}/\{a_{\xi_i}\colon i<N\})=\tp_{L^{Sk}}(a_{\eta}/\{a_{\xi_i}\colon i<N\}).$$
    Hence we may conclude that $\bbM^{Sk}\models d'_\vphi(a_\eta,a_{\xi_0},\dotsm, a_{i_{N-1}})$, and thus $\bbM\models d_\vphi(a_\eta)$. 
    
    %Now, suppose $\eta$ has color $c_t$ for $t\neq \ell$. Consider $\xi_{N'}$ for $N'>N$, with $t=N'\mod j$, so $\xi_{N'}$ has color $c_t$. Again, we have $\models d_\vphi(a_{\xi_{N'}})$, so by (f), 
    %$$\tp_{L^{Sk}}(a_{\xi_{N'}}/\{a_{\xi_i}\colon i<N\})=\tp_{L^{Sk}}(a_{\eta}/\{a_{\xi_i}\colon i<N\})$$, we have $\models d_\vphi(a_\eta)$.

    We have seen that $\bbM\models d_\vphi(a_\eta)$ for all children of $a_{\xi_{N-1}}$. Hence for all such $a_\eta$, $\vphi(x,a_\eta)$ must be in the unique nonforking extension of $p(x)$ to a type over $\bbM$. Thus $\{\vphi(x, a_\eta)\colon a_\eta\enspace\text{is a child of}\enspace a_{\xi_{N-1}}\}$ is consistent. But these are the children of a common node in $A$, and hence must be inconsistent by the definition of \textbf{c}-TP. This is a contradiction. 
\end{proof}

As an immediate corollary of the above facts, we get the following characterization.

\begin{theorem}
    A formula $\vphi(x,y)$ is unstable if, and only if, it has \textbf{c}-TP.
\end{theorem}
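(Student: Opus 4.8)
The plan is to assemble this characterization as an immediate consequence of the results already proved in the excerpt, packaging both implications. For the forward direction, suppose $\vphi(x,y)$ is unstable. Then the instability of $\vphi$ witnesses instability of $T$, and Shelah's First Dichotomy (Theorem \ref{first dichotomy}) tells us $T$ has IP or SOP. One must be slightly careful here: we want \emph{the same formula} $\vphi$ to witness \textbf{c}-TP, not merely that some formula in $T$ does. So I would instead argue at the formula level, using that an unstable formula has either the independence property or the (strict) order property as a \emph{formula}, and then trace through the constructions of Proposition \ref{random graph C-TP} and the corollaries to see they produce \textbf{c}-TP for a formula built from $\vphi$. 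In fact, since \textbf{c}-TP is a property of a formula and the corollaries/propositions above (``If $T$ has IP then $T$ has \textbf{c}-TP'', ``If $T$ has TP then $T$ has \textbf{c}-TP'') are stated at the level of theories, the cleanest route is: unstable $\vphi$ gives an unstable $T$, so $T$ has \textbf{c}-TP, hence \emph{some} formula $\psi$ has \textbf{c}-TP — and if one only wants the theory-level statement that is enough. But the theorem as stated is at the formula level, so I would note that the proof of Theorem \ref{c-TP unstable theorem} actually shows the contrapositive: if $\vphi$ is stable then $\vphi$ does \emph{not} have \textbf{c}-TP. That gives the direction ``\textbf{c}-TP $\Rightarrow$ unstable'' for formulas directly.

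So the two halves come from two already-established facts: (1) Theorem \ref{c-TP unstable theorem}, which states precisely that a formula with \textbf{c}-TP is unstable — this is the ``$\Leftarrow$'' direction verbatim; and (2) the chain of corollaries establishing that unstable \emph{theories} have \textbf{c}-TP. For the formula-level ``$\Rightarrow$'' I would make the following remark: if $\vphi(x,y)$ is unstable, then by Shelah's dichotomy applied to the formula $\vphi$ (the formula-level version: an unstable formula has IP or SOP$_k$ for some $k$, equivalently has IP or the order property), we reduce to two cases. If $\vphi$ has IP, then the argument of the Corollary following Proposition \ref{changing the number of colors} — which uses Lemma 2.2 of \cite{Laskowski_Shelah_Karp_complexity_IP} to interpret the random graph and runs the construction of Proposition \ref{random graph C-TP} inside it — produces a formula with \textbf{c}-TP, and one checks that formula is (a Boolean combination of instances of) $\vphi$. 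If $\vphi$ has the order property / SOP, it has TP$_k$ as a formula, hence TP, and the Proposition ``If $T$ has TP then $T$ has \textbf{c}-TP'' — whose proof just recolors siblings of a TP-witness — gives \textbf{c}-TP for the same formula.

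The main obstacle, and the only thing requiring care, is exactly this formula-level bookkeeping: the intermediate propositions in the excerpt are phrased for theories, whereas the final theorem is about a single $\vphi$. If the intended reading of the theorem is the theory-level equivalence (``$T$ has an unstable formula iff $T$ has a \textbf{c}-TP formula''), then the proof is a one-liner combining the preceding corollary with Theorem \ref{c-TP unstable theorem}. If the genuinely formula-level statement is intended, one must either (a) observe that the witnessing formulas in the IP and TP cases can be taken to be instances/Boolean combinations of $\vphi$, which is immediate from inspecting those short proofs, or (b) restrict to the theory-level claim. I would write the proof in the terse style the author clearly intends:

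\begin{proof}
    This follows by combining the preceding results. If $\vphi$ has \textbf{c}-TP, then it is unstable by Theorem \ref{c-TP unstable theorem}. Conversely, if $\vphi$ is unstable, then $T$ is unstable, so $T$ has \textbf{c}-TP by the corollary to Theorem \ref{first dichotomy}; indeed, applying Shelah's dichotomy to the formula $\vphi$ itself, $\vphi$ has either IP or the order property, and in either case the constructions above (Proposition \ref{random graph C-TP} together with its corollary, respectively the proposition deducing \textbf{c}-TP from TP) produce a witness to \textbf{c}-TP built from instances of $\vphi$.
\end{proof}
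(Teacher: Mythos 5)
Your proposal matches the paper's treatment exactly: the paper offers no separate argument, presenting this theorem as an immediate corollary of Theorem \ref{c-TP unstable theorem} (which is the formula-level ``$\Leftarrow$'' direction verbatim) together with the preceding corollary that unstable theories have \textbf{c}-TP via Shelah's First Dichotomy. Your concern about the formula-level bookkeeping in the ``$\Rightarrow$'' direction is legitimate and is in fact a point the paper glosses over, since its forward-direction corollaries are stated only for theories; your proposed repair (applying the dichotomy to $\vphi$ itself and checking that the IP and TP constructions produce a \textbf{c}-TP witness built from instances of $\vphi$) is the right way to make the formula-level statement precise.
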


Ultimately, the fact about \textbf{c} used in this proof is that it is Ramsey index structure which has more than one quantifier-free one type, each of which has infinitely many instances. It is therefore possible to extend this result to arbitrary index structures which meet those criteria.

\begin{corollary}\label{generalized stability characterization}
    Suppose $\calI$ is a Ramsey index structure with at least two non-algebraic quantifier-free 1-types. Then a formula $\vphi(x,y)$ is unstable if, and only if, it has $\calI$-TP.
\end{corollary}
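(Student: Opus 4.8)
The plan is to mimic the proof of Theorem \ref{c-TP unstable theorem} essentially verbatim, checking that the only properties of \textbf{c} actually used are: (i) \textbf{c} is a Ramsey index structure, so that $L_{s,\kappa,\textbf{c}}$-indiscernible trees exist and have the modeling property (this is where we invoke Proposition \ref{L_s,I trees have modeling property}); and (ii) \textbf{c} has at least two distinct quantifier-free $1$-types, each realized infinitely often (non-algebraicity). The forward direction, that $\calI$-TP implies instability, is the substantive one. The reverse direction (unstable implies $\calI$-TP) reduces to the $\textbf{c}_2$-TP case already handled: by Shelah's First Dichotomy an unstable theory has IP or SOP, hence has $\textbf{c}_2$-TP, and a $\textbf{c}_2$-indexed witness gives an $\calI$-indexed witness simply by using two of the non-algebraic $1$-types of $\calI$ to play the roles of "red" and "green" — one partitions the successors of each node into two infinite pieces according to which of the two $1$-types the index realizes, and since each of those pieces is infinite the $\textbf{c}_2$-construction transfers, with the new quantifier-free $2$-type $q$ being "(first type) then (second type)" in the $<$-order, or whichever ordering of the two types is realized.

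For the forward direction, I would run the induction building $(\mathbf{T}_n)_{n<\omega}$ and $(\xi_n)_{n<\omega}$ exactly as in the proof above, with the colors $c_0,\dots,c_{m-1}$ replaced by the finitely many quantifier-free $1$-types appearing in the $q$ witnessing $q$-inconsistency of siblings. The key points that need re-verification: first, one needs the Skolemized witness to $\calI$-TP to be taken $L_{s,\kappa,\calI}$-indiscernible for $\kappa>2^{|L^{Sk}|+\aleph_0}$, which is exactly Proposition \ref{L_s,I trees have modeling property} plus compactness. Second, given $\xi_{n-1}\triangleleft\eta,\nu$ at the same level $\alpha$ of $\calI^{<\kappa}$ realizing the same quantifier-free $1$-type over the empty set, one needs $\qftp_{L_{s,\calI}}(\eta,\xi_0,\dots,\xi_{n-1})=\qftp_{L_{s,\calI}}(\nu,\xi_0,\dots,\xi_{n-1})$; this holds because $\eta,\nu$ have the same length (same $P_\alpha$), the same tree-relations to the $\xi_i$ (they both strictly extend $\xi_{n-1}$ and the $\xi_i$ form a chain below them), the same $<_{lex}$ and $\land$ relations to the $\xi_i$, and within the single copy of $\calI$ sitting at the successors of $\xi_{n-1}$ — or more carefully, within the relevant block of $\calI^{<\kappa}$ — they realize the same $\calI$-quantifier-free type because a single $1$-type in $L_\calI'$ pins down all $L_\calI'$-relations involving just that one element. (One must be slightly careful that the relevant relations in $L_{s,\calI}$ involving $\eta$ or $\nu$ and the chain $\xi_0\trianglelefteq\cdots\trianglelefteq\xi_{n-1}$ genuinely depend only on the length, the tree order, $<_{lex}$, $\land$, and the $1$-type of $\eta$ resp. $\nu$ — this is where "at least two non-algebraic $1$-types" is used, since non-algebraicity guarantees each such $1$-type has $\kappa$-many realizations among the successors, so the tree $\mathbf{T}_n$ one restricts to still has full height.) Third, the counting step: there are $\leq 2^{|L^{Sk}|+\aleph_0}$ many $m$-tuples of $L^{Sk}$-types over the finite set $\{a_{\xi_i}\colon i<n\}$, so pigeonholing over the $\kappa$ levels works identically. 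The stability argument at the end — extracting the $\vphi$-definition $d_\vphi$, noting it uses parameters from only finitely many $a_{\xi_i}$, pulling back through Skolem functions to an $L^{Sk}$-formula $d'_\vphi$, and concluding via clause (f) that $d_\vphi(a_\eta)$ holds for every successor $\eta$ of $\xi_{N-1}$, contradicting $q$-inconsistency of siblings — goes through word for word, since it only refers to the $1$-types of siblings and to clause (f).

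I expect the main obstacle to be purely bookkeeping: verifying that clause (f)-style rigidity really does follow from equality of quantifier-free $1$-types in $L_{s,\kappa,\calI}$, i.e., that for two elements $\eta,\nu$ strictly above $\xi_{n-1}$ at the same level with the same $L_\calI'$-$1$-type, all of $\trianglelefteq$, $<_{lex}$, $\land$, the $P_\alpha$'s, and the relational images of the old function symbols of $L_\calI$ agree on $(\eta,\xi_0,\dots,\xi_{n-1})$ and $(\nu,\xi_0,\dots,\xi_{n-1})$. The subtlety is that in $\calI^{<\kappa}$ the $L_\calI'$-relations only relate nodes sharing a common predecessor, so relations between $\eta$ (or $\nu$) and the $\xi_i$ with $i<n-1$ collapse trivially, and the only genuine constraint is the $1$-type of $\eta$ within the copy of $\calI$ above $\xi_{n-1}$; but one should double-check that the meet function $\land$ and $<_{lex}$ of $\eta$ with each $\xi_i$ are determined — they are, since $\eta\wedge\xi_i=\xi_i$ for $i\le n-1$ (as $\xi_i\trianglelefteq\xi_{n-1}\triangleleft\eta$) and the $<_{lex}$-comparison of $\eta$ with $\xi_i$ for $i<n-1$ is determined by the fixed data about $\xi_i$ vs.\ $\xi_{n-1}$, while the comparison with $\xi_{n-1}$ itself is $\xi_{n-1}<_{lex}\eta$ outright. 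Given that this bookkeeping is identical in spirit to what already runs in the \textbf{c} case (there the "color" is exactly a $1$-type), the corollary follows without new ideas; accordingly, the write-up can simply be: "The proof of Theorem \ref{c-TP unstable theorem} used only that \textbf{c} is Ramsey and has $\geq 2$ non-algebraic quantifier-free $1$-types — replacing the colors $c_0,\dots,c_{m-1}$ throughout by the quantifier-free $1$-types mentioned in $q$ — together with, for the converse, Shelah's First Dichotomy and the transfer of a $\textbf{c}_2$-TP witness to an $\calI$-TP witness using two non-algebraic $1$-types of $\calI$."
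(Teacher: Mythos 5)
Your proposal is correct and follows essentially the same route as the paper, which justifies the corollary only by remarking that the proof of Theorem \ref{c-TP unstable theorem} uses nothing about \textbf{c} beyond its being Ramsey and having at least two non-algebraic quantifier-free $1$-types (each playing the role of a color). Your additional bookkeeping—re-verifying clause (f) in $L_{s,\kappa,\calI}$ and transferring a $\textbf{c}_2$-TP witness to an $\calI$-TP witness for the converse—is exactly the verification the paper leaves implicit.
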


As noted in the Introduction, this characterization of the stability property is \textit{positive}, in the sense that the definition of $\calI$-TP uses only $\vphi$-formulas, never $\neg\vphi$.

\section{Colored Tree Properties of the First and Second Kind}

TP is not the only tree property of interest. Thus, knowing that \textbf{c}-TP characterizes stability, a subsequent research question was to understand the colored generalizations of other tree properties. First among these is the Tree Property of the First Kind ($\TPi$):

\begin{definition}{}\label{TP1 definition}
        We say that a formula $\vphi(x,y)$ has $\text{TP}_1$ when there exists a tree of parameters $(a_\eta)_{\eta\in \omega^{<\omega}}$ and natural number $k$ such that:

        \begin{enumerate}
    \item \textbf{Paths are consistent:} For all $\eta\in\omega^{<\omega}$, $\{\vphi(x, a_{\eta\mid i})\colon i\leq ln(\eta)\}$ is consistent.

   \item \textbf{Incomparables are $k$-inconsistent:} For pairwise incomparable $\eta_1,\dotsm, \eta_k\in\omega^{<\omega}$, $\{\vphi(x,a_{\eta_1}),\dotsm, \vphi(x,a_{\eta_k})\}$ is inconsistent.

\end{enumerate}
\end{definition}

In \cite{Kim_Kim_Notions_Around_TP1}, Kim and Kim proved that we may always take $k$ to be equal to 2 in the definition of $\TPi$, and the definition is often presented that way. However, the generalization to \textbf{c}-$\TPi$ is more obvious when stated in the original way, with $k$ arbitrary.

While $\calI$-TP could straightforwardly be defined for any Ramsey index structure $\calI$, the same is not true $\TPi$. The problem is that, in order to make sense of the inconsistency of incomparables together with $q$-inconsistency, relations and functions must be defined \textit{between} arbitrary incomparable elements of $\calI^{<\omega}$. The conception of $\calI^{<\omega}$ as consisting of ``copies of" $\calI$, each of which has as its domain the set of immediate successors of a particular node, allows us to make sense of relations and functions defined on immediate successors. Yet it is less clear how to proceed with incomparables which are not siblings.\footnote{One option, which we hope to explore in future work, involves taking the index structure to be the \textit{free superposition} of the tree structure with a Ramsey index structure $\calI$. As per a result of Bodirsky in \cite{Bodirsky_2012}, this way of amalgamating two Ramsey structures gives a new one. However, it also introduces a new, unwanted ordering, orthogonal to the tree's lexicographic order. For now, we stick to the simple case of colored linear orders, where it is clear how to define the correct $\TPi$ analogue.} However, in a case like that of \textbf{c}, where the expansion of the linear order is only unary, the problem does not arise. Hence, we solely define \textbf{c}-$\TPi$, reserving study of a more general form for future work.

\begin{definition}{}\label{cTP1 definition}
        We say that a formula $\vphi(x,y)$ has \textbf{c}-$\text{TP}_1$ when there exists a tree of parameters $(a_\eta)_{\eta\in \textbf{c}^{<\omega}}$ and a complete quantifier-free $n$-type $q$ in the language of \textbf{c}, such that:

        \begin{enumerate}
    \item \textbf{Paths are consistent:} For all $\eta\in\textbf{c}^{<\omega}$, $\{\vphi(x, a_{\eta\mid i})\colon i\leq ln(\eta)\}$ is consistent.

   \item \textbf{Incomparables are $q$-inconsistent:} For pairwise incomparable $\eta_1,\dotsm, \eta_n\in\textbf{c}^{<\omega}$, if \newline $q= \qftp_{L_{\textbf{c}}}(\eta_1,\dotsm, \eta_n)$, then $\{\vphi(x,a_{\eta_1}),\dotsm, \vphi(x,a_{\eta_n})\}$ is inconsistent.

\end{enumerate}
\end{definition}

We may assume that trees witnessing \textbf{c}-$\TPi$ are $L_{0,\textbf{c}}$-indiscernible: By Proposition \ref{L_0,I trees have modeling property}, such trees have the modeling property, and examining the definition of \textbf{c}-$\TPi$, it is clear that a tree locally based on a witness is still a witness.

The case of TP and \textbf{c}-TP above might suggest that \textbf{c}-$\TPi$ is a strictly weaker notion than $\TPi$, and perhaps carves out a distinct classificatory property of theories. In fact, it recaptures precisely the same notion, even locally (up to a conjunction of the original formula). This shows that the requirement that incomparables give inconsistent formulas is a particularly strong one, leading to a ``collapse" to 2-$\TPi$, both from $k$-$\TPi$ and from \textbf{c}-$\TPi$.

\begin{theorem}{}\label{TP1=cTP1}
    A theory $T$ has \textbf{c}-$\text{TP}_1$ if, and only if, it has $\text{TP}_1$. Moreover, $\vphi(x,y)$ has \textbf{c}-$\TPi$ if, and only if, a conjunction of instances of $\vphi(x,y)$ has $\TPi$.

\begin{proof}
    If $T$ has $\TPi$, then coloring the tree witnessing it however you like will give a witness to \textbf{c}-$\TPi$; this holds locally.
    
    Suppose $T$ has \textbf{c}-$\TPi$, witnessed by $\vphi(x,y)$ and the tree of parameters $A=(a_\eta)_{\eta\in \textbf{c}^{<\omega}}$. Assume that $q$ is the type $c_0<c_1<\dotsm <c_{k-1}$, where $c_0,\dotsm, c_{k-1}$ are color predicates that may or may not be distinct from each other.
    
     We produce a new tree, indexed by $\omega^{<\omega}$, which will witness $\text{TP}_1$ for the formula $\psi(x,\bar{y})=\bigwedge _{0\leq i<k}\vphi(x,y_i)$. This tree will consist of $k$-tuples of successive elements from $A$, cycling through the colors appearing in $q$. The diagram below depicts the tree constructed for $k=2$.
    
    Define a tree of parameters $B=(b_\eta)_{\eta\in\omega^{<\omega}}$ as follows. Let $b_\emptyset=(a_\emptyset,\dotsm, a_\emptyset)$ be a $k$-tuple. We proceed to choose the elements of the tree, making sure that the following are all satisfied for each $\eta\in\omega^{<\omega}$: 

    \begin{enumerate}[label=(\alph*)]
        \item $b_\eta$ is a $k$-tuple $(a_{\eta_0},\dotsm, a_{\eta_{k-1}})$ of (possibly distinct) elements from $A$;

        \item The elements of $A$ appearing in $b_\eta$ and its predecessors lie on some path $\nu$ from $A$. More carefully, let $A_{b_\eta}$ be the set $\bigcup_{\gamma\trianglelefteq\eta}\{a_\xi\colon a_\xi\enspace\text{appears in the tuple}\enspace b_{\gamma}\}$. Then there exists a path $\nu\in\textbf{c}^\omega$ such that $$A_{b_\eta}\subseteq \{a_{\nu(i)}\colon i<\omega\}.$$

        \item If $\eta\neq \emptyset$, then for each $i<k$, $b_\eta$ contains an element of $A$ indexed by a node colored $c_i$.

        \item If $\eta\bot\nu\in\omega^{<\omega}$, then $\eta_i\bot\nu_j$, for $i,j\in\{0,\dotsm, k-1\}$.
    \end{enumerate}

    Suppose we have defined $b_\eta$ satisfying these conditions, for $\eta\in\omega^{n}$. Since, by (a) and (b), $b_\eta$ consists of  tuples from $A$ lying on a path, $A_{b_\eta}$ contains some element with $\trianglelefteq$-maximal index, say $\eta^*\in \textbf{c}^{<\omega}$. Let $(i_k)_{k<\omega}$ be a sequence of distinct elements from \textbf{c} with color $c_0$; assuming \textbf{c} has order type $\omega$, take these to be the even nodes, as depicted in the diagram. Let $d_1,d_2,\dotsm, d_{k-1}\in \textbf{c}$ be elements of colors $c_1,c_2,\dotsm, c_{k-1}$, respectively. 
    %Then $(\eta^{*\smallfrown} i_k^\smallfrown 1)_{k<\omega}$ is a sequence of green-indexed elements immediately following $\eta^{*\smallfrown} i_k$. 
    Define $$b_{\eta^\smallfrown k}=(a_{\eta^{*\smallfrown}i_k},a_{\eta^{*\smallfrown}i_k^\smallfrown d_1},\dotsm, a_{\eta^{*\smallfrown}i_k^\smallfrown d_1^\smallfrown\dotsm^\smallfrown d_{k-1}}).$$ For this new element of the tree, (a) is satisfied because it is a $k$-tuple, (b) is satisfied because the elements appearing in $b_{\eta^\smallfrown k}$ lie on a path, (c) is satisfied, since the $i_k$ accounts for color $c_0$ and the $d_i$ were chosen to include all the other colors, and (d) is satisfied because the $i_k$s were chosen to be distinct.

    \adjustbox{center, scale=.9}{
\begin{tikzpicture}
    \node(a0r){\color{red}$a_0$};
    \node(a0space1)[right of = a0r]{};
    \node(a1r)[right of = a0space1]{\color{red}$a_2$};
    \node(a0space2)[right of = a1r]{};
    \node(a2r)[right of =a0space2]{\color{red}$a_4$};
    \node(dots1)[right of =a2r]{...};
    \node(a0space3)[right of = dots1]{};
    \node(air)[right of = a0space3]{\color{red}$a_{i_k}$};
    \node (dots2)[right of =air]{...};
    \node(a0g)[above of =a0r]{\color{green}$a_{01}$};\
    \node(a1g)[above of =a1r]{\color{green}$a_{21}$};
    \node(a2g)[above of =a2r]{\color{green}$a_{41}$};
    \node(aig)[above of =air]{\color{green}$a_{i_k1}$};
    \node[draw, fit=(a0r) (a0g)]{};
    \node[draw, fit=(a1r) (a1g)]{};
    \node[draw, fit=(a2r) (a2g)]{};
    \node[draw, fit=(air) (aig)]{};
    %\node(b0)[below of =a0r]{$b_0$};

    \node(a01r)[above of = a0g]{\color{red}$a_{012}$};
    \node(a01g)[above of = a01r]{\color{green}$a_{0121}$};
    \node(a01space1)[left of = a01r]{};
    \node(a00r)[left of = a01space1]{\color{red}$a_{010}$};
    \node(a00g)[above of = a00r]{\color{green}$a_{0101}$};
    \node(a01space2)[right of = a01r]{};
    \node(a02r)[right of = a01space2]{\color{red}$a_{014}$};
    \node(a02g)[above of = a02r]{\color{green}$a_{0141}$};
    \node(dotsa0)[right of = a02g]{...};
    \node[draw, fit=(a01r) (a01g)]{};
    \node[draw, fit=(a00r) (a00g)]{};
    \node[draw, fit=(a02r) (a02g)]{};
    \node(dotsabove0)[above of = a01g]{\vdots};

    \node(ai1r)[above of = aig]{\color{red}$a_{i_k12}$};
    \node(ai1g)[above of = ai1r]{\color{green}$a_{i_k121}$};
    \node(ai1space1)[left of = ai1r]{};
    \node(ai0r)[left of = ai1space1]{\color{red}$a_{i_k10}$};
    \node(ai0g)[above of = ai0r]{\color{green}$a_{i_k101}$};
    \node(ai1space2)[right of = ai1r]{};
    \node(ai2r)[right of = ai1space2]{\color{red}$a_{i_k14}$};
    \node(ai2g)[above of = ai2r]{\color{green}$a_{i_k141}$};
    \node(dotsai)[right of = ai2g]{...};
    \node[draw, fit=(ai1r) (ai1g)]{};
    \node[draw, fit=(ai0r) (ai0g)]{};
    \node[draw, fit=(ai2r) (ai2g)]{};
    \node(dotsabovei)[above of = ai1g]{\vdots};

    \node(empty1)[below of = a2r]{$a_\emptyset$};
    \node(empty2)[below of = empty1]{$a_\emptyset$};
    %\node(empty3)[right of = empty2]{$a_\emptyset$};
    \node[draw, fit = (empty1) (empty2)]{};

    \draw[black](a0r)--(a0g);
    \draw[black](a1r)--(a1g);
    \draw[black](a2r)--(a2g);
    \draw[black](air)--(aig);
    \draw[black](a0g)--(a01r);
    \draw[black](a0g)--(a00r);
    \draw[black](a0g)--(a02r);
    \draw[black](a00r)--(a00g);
    \draw[black](a01r)--(a01g);
    \draw[black](a02r)--(a02g);
    \draw[black](aig)--(ai1r);
    \draw[black](aig)--(ai0r);
    \draw[black](aig)--(ai2r);
    \draw[black](ai0r)--(ai0g);
    \draw[black](ai1r)--(ai1g);
    \draw[black](ai2r)--(ai2g);
    \draw[black](empty1)--(a0r);
    \draw[black](empty1)--(a1r);
    \draw[black](empty1)--(a2r);
    \draw[black](empty1)--(air);

\end{tikzpicture}
}

    The tree $B$ witnesses $k$-$\TPi$ for $\psi(x,\bar{y})$. By condition (b), the collection of tuples indexed by paths through $B$ consist of elements appearing on paths through the original tree $A$. Since instances of $\vphi$ along those paths are consistent, as a witness to \textbf{c}-$\TPi$, the paths in the new tree will likewise be consistent for $\psi$, as it is a conjunction of instances of $\vphi$. Now, by (c) and (d), each $k$-tuple of incomparable $\eta_1,\dotsm, \eta_{k-1}\in \omega^{<\omega}$ index elements containing incomparables $a_{\nu_0}, a_{\nu_1},\dotsm,a_{\eta_{k-1}}$ such that $(\nu_0,\dotsm, \nu_{k-1})\models q$. Hence the formulas $\{\vphi(x, a_{\nu_0}),\vphi(x, a_{\nu_1}),\dotsm, \vphi(x,a_{\nu_{k-1}})\}$ must be inconsistent, by the $q$-inconsistency of $A$. Hence the formulas $\{\psi(x,b_{\eta_0}),\dotsm, \psi(x,b_{\eta_{k-1}})\}$ must be inconsistent, so we have $k$-$\text{TP}_1$. Note that $\psi(x,\bar{y})$ is a conjunction of instances of $\vphi(x,y)$.

\end{proof}
\end{theorem}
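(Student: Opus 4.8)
The plan is to prove the two implications separately, with essentially all of the work in the forward direction, that $\textbf{c}$-$\TPi$ implies ordinary $\TPi$ for a conjunction of instances of the formula. The reverse direction is the easy one: given a tree $(a_\eta)_{\eta\in\omega^{<\omega}}$ witnessing $\TPi$ for $\vphi(x,y)$ with incomparables $k$-inconsistent, one colors its nodes arbitrarily to regard it as indexed by $\textbf{c}^{<\omega}$; paths stay consistent, any $k$ pairwise incomparable nodes stay $k$-inconsistent, and hence are $q$-inconsistent for $q$ the quantifier-free type of some fixed tuple of $k$ pairwise incomparable colored nodes, so $\vphi$ has $\textbf{c}$-$\TPi$. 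This yields the theory-level implication $\TPi\Rightarrow\textbf{c}$-$\TPi$, and, together with a routine unpacking of a conjunction into successive colored levels of a tree, the reverse half of the formula-level equivalence.

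For the forward direction, suppose $\vphi(x,y)$ has $\textbf{c}$-$\TPi$, witnessed by a tree $A=(a_\mu)_{\mu\in\textbf{c}^{<\omega}}$ and a complete quantifier-free $k$-type $q$ in the language $L_{0,\textbf{c}}$; by Proposition \ref{L_0,I trees have modeling property} I may take $A$ to be $L_{0,\textbf{c}}$-indiscernible. Let $c_0,\dots,c_{k-1}$ be the colors occurring in $q$, listed in the lexicographic order of the $q$-nodes (repetitions allowed), and put $\psi(x,\bar y)=\bigwedge_{i<k}\vphi(x,y_i)$. I build a tree $B=(b_\eta)_{\eta\in\omega^{<\omega}}$ whose nodes are $k$-tuples of entries of $A$, by recursion on levels, maintaining the invariants of the excerpt: $b_\eta$ is a descending chain in $A$ whose successive entries are colored $c_0,\dots,c_{k-1}$; all entries of $b_\eta$ and of its $\trianglelefteq$-predecessors lie on a single branch of $A$; for $\eta\neq\emptyset$ every color $c_i$ occurs among the entries of $b_\eta$; and whenever $\eta\perp\nu$ in $\omega^{<\omega}$, every entry of $b_\eta$ is $\textbf{c}^{<\omega}$-incomparable to every entry of $b_\nu$. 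At the recursive step, let $\eta^*$ be the $\trianglelefteq$-maximal $A$-index among the entries of $b_\eta$ and its predecessors (it exists, since those entries lie on one branch); then for each child $\eta^\smallfrown l$ of $\eta$ pick a $c_0$-colored immediate successor of $\eta^*$, the successors chosen distinct and increasing in the attached copy of $\textbf{c}$, and below it a chain of nodes colored $c_1,\dots,c_{k-1}$; the $k$-tuple of $A$-entries running down this chain is $b_{\eta^\smallfrown l}$. Checking the four invariants at this step is routine.

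It remains to see that $B$ witnesses $\TPi$ for $\psi$, with inconsistency parameter $k$. Consistency of branches is immediate: by the branch-invariant, the $A$-entries appearing along any branch of $B$ all lie on one branch of $A$, so the associated $\vphi$-formulas, hence their conjunctions $\psi$, are consistent. For the inconsistency of incomparables, take pairwise incomparable $\eta^{(1)},\dots,\eta^{(k)}$ in $\omega^{<\omega}$; by the color-invariant each $b_{\eta^{(j)}}$ has an entry $a_{\nu_{j-1}}$ of the color $q$ prescribes in lexicographic position $j$, by the incomparability-invariant $\nu_0,\dots,\nu_{k-1}$ are pairwise $\textbf{c}^{<\omega}$-incomparable, and the increasing choice of successors makes their lexicographic order match that of $q$. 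Granting that $(\nu_0,\dots,\nu_{k-1})\models q$, the set $\{\vphi(x,a_{\nu_0}),\dots,\vphi(x,a_{\nu_{k-1}})\}$ is inconsistent by the $q$-inconsistency of $A$, and since each $\psi(x,b_{\eta^{(j)}})$ implies $\vphi(x,a_{\nu_{j-1}})$, the set $\{\psi(x,b_{\eta^{(1)}}),\dots,\psi(x,b_{\eta^{(k)}})\}$ is inconsistent. Hence $\psi$, a conjunction of instances of $\vphi$, has $\TPi$.

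The step requiring the most care, and the one I expect to be the main obstacle, is the verification that the extracted tuple $(\nu_0,\dots,\nu_{k-1})$ realizes $q$. That its entries carry the colors of $q$ is forced by the color-invariant, and that their lexicographic order is that of $q$ is arranged by the increasing choice of successors; but $q$ is a complete quantifier-free type in $L_{0,\textbf{c}}$, so it also constrains the meet structure of its $k$ nodes, and under a one-entry-per-node extraction that meet structure is whatever the branching shape of $(\eta^{(1)},\dots,\eta^{(k)})$ in $\omega^{<\omega}$ dictates --- a shape ranging over all incomparable configurations while $q$ is fixed. Reconciling this --- whether by first re-witnessing $\textbf{c}$-$\TPi$ with $q$ of a canonical shape (such as a ``fan'', all $k$ nodes immediate successors of one node), by reducing to the $k=2$ case via the Kim--Kim collapse of $k$-$\TPi$ to $2$-$\TPi$, or by thickening each $b_\eta$ so that any two incomparable nodes of $B$ jointly contain a copy of $q$ while single nodes and branches stay consistent --- is where the real work lies, and it is also what makes the passage from $\vphi$ to a conjunction of its instances unavoidable.
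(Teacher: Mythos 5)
Your construction is the same as the paper's: a new tree $B$ indexed by $\omega^{<\omega}$ whose nodes are $k$-tuples of entries of $A$ descending along a single branch and cycling through the colors of $q$, maintained with the same four invariants, and the same verification that branches of $B$ are consistent for the conjunction $\psi$ while incomparable nodes of $B$ contain a realization of $q$ and are therefore inconsistent. The only substantive divergence is your closing paragraph, where you read $q$ as a complete quantifier-free type in the tree language $L_{0,\textbf{c}}$ and therefore worry that it constrains the meet structure of its $k$ nodes, something a one-entry-per-node extraction cannot control as $(\eta^{(1)},\dots,\eta^{(k)})$ ranges over all incomparable configurations. Under the paper's Definition \ref{cTP1 definition}, however, $q$ is $\qftp_{L_{\textbf{c}}}(\eta_1,\dots,\eta_n)$, a quantifier-free type in the language of $\textbf{c}$ itself (colors and the linear, i.e.\ lexicographic, order only); pairwise incomparability is a separate hypothesis, and meets are not part of $q$. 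With that reading the step you left as ``granting that $(\nu_0,\dots,\nu_{k-1})\models q$'' is exactly what your color- and incomparability-invariants already deliver: list the incomparable $\eta^{(1)},\dots,\eta^{(k)}$ in lexicographic order and extract from $b_{\eta^{(j)}}$ its entry of color $c_{j-1}$; the resulting indices are pairwise incomparable, carry the colors of $q$, and (since successors were chosen increasing) appear in the lexicographic order $q$ prescribes. So none of the repair strategies you propose (canonical fan shape, reduction to $k=2$, thickening) is needed, and your argument is complete once the definition is read as the paper intends; the passage to a conjunction of instances of $\vphi$ remains necessary for the reason you give, namely that each node of $B$ must simultaneously assert $\vphi$ at $k$ differently colored indices.
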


We may also define a modified version of the Tree Property of the Second Kind ($\TPii$). Here is the original definition.

\begin{definition}{}\label{TP2 definition}
        We say that a formula $\vphi(x,y)$ has $\text{TP}_2$ when there exists an array of parameters $(a_{i,j})_{i\in \omega, j\in \omega}$ and a natural number $k$ such that:

        \begin{enumerate}
    \item \textbf{Paths are consistent:} For all $f\colon \omega\to\omega$, $\{\vphi(x, a_{i,f(i)})\colon i\in\omega\}$ is consistent.

   \item \textbf{Rows are $q$-inconsistent:} For every $i\in\omega$ and $j_1,\dotsm, j_k\in \omega$,  $\{\vphi(x,a_{i,j_1}),\dotsm, \vphi(x,a_{i,j_k})\}$ is inconsistent.

\end{enumerate}
\end{definition}

Unlike with $\TPi$, $\TPii$ introduces no issues for offering a generalized definition. 
\begin{definition}{$\calI$-$\text{TP}_2$}
        For an index-model $\calI$, we say that a formula $\vphi(x,y)$ has $\calI$-$\text{TP}_2$ when there exists an array of parameters $(a_{i,j})_{i\in \omega, j\in \calI}$ and a quantifier-free $n$-type $q$ in the language of $\calI$, such that:

        \begin{enumerate}
    \item \textbf{Paths are consistent:} For all $f\colon \omega\to\calI$, $\{\vphi(x, a_{i,f(i)})\colon i\in\omega\}$ is consistent.

   \item \textbf{Rows are $q$-inconsistent:} For every $i\in\omega$ and $j_1,\dotsm, j_n\in \calI$, if $(j_1,\dotsm,j_n)\models q$, then $\{\vphi(x,a_{i,j_1}),\dotsm, \vphi(x,a_{i,j_n})\}$ is inconsistent.

\end{enumerate}
\end{definition}

With this definition, it may immediately be observed that the random graph again provides a distinction between \textbf{c}-$\TPii$ and $\TPii$.

\begin{example}
    The random graph has \textbf{c}-$\TPii$ for the colored linear order.
    \begin{proof}
        The exact same proof for \ref{random graph C-TP} that the random graph has \textbf{c}-TP shows that it has \textbf{c}-$\TPii$, since the parameters chosen form an array.
    \end{proof}
\end{example}

With this fact in hand, we may observe through soft methods that a dichotomy theorem, analogous to Shelah's result that $T$ has TP if, and only if, $T$ has $\TPi$ or $\TPii$, holds in the colored context.

\begin{corollary}{(Colored tree property dichotomy)}
A theory $T$ has \textbf{c}-TP if, and only if, it has \textbf{c}-$\TPi$ or \textbf{c}-$\TPii$.
\begin{proof}
    It is immediate that a witness to \textbf{c}-$\TPi$ or \textbf{c}-$\TPii$ gives an instance of \textbf{c}-TP. To show the converse, suppose $T$ has \textbf{c}-TP but not \textbf{c}-$\TPi$. Since \textbf{c}-$\TPi$  is equivalent to $\TPi$, $T$ does not have $\TPi$, and hence does not have the Strict Order Property (SOP). Since $T$  is \textbf{c}-TP, it is unstable by Theorem \ref{c-TP unstable theorem}. Thus Shelah's First Dichotomy Theorem \ref{first dichotomy} implies that $T$ has SOP or IP, so since $T$ doesn't have SOP, it must have IP. Any theory with IP has a formula encoding the the random graph as an induced subgraph, and the same proof witnessing \textbf{c}-$\TPii$ for the random graph will show that $T$ also has \textbf{c}-$\TPii$. This completes the proof.
\end{proof}
\end{corollary}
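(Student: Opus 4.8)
The plan is to treat this corollary as a bookkeeping argument that assembles results already in hand; there is no essentially new content, and in fact the colored dichotomy is a bit easier to obtain than its classical analogue.

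For the forward direction, note that a witness to either colored tree property rearranges into a witness to \textbf{c}-TP. If $\vphi(x,y)$ witnesses \textbf{c}-$\TPi$ via a tree $(a_\eta)_{\eta\in\textbf{c}^{<\omega}}$ and quantifier-free type $q$, then the immediate successors of any node form a copy of \textbf{c}, hence realize $q$, and are pairwise $\trianglelefteq$-incomparable; so $n$ siblings realizing $q$ index inconsistent formulas, i.e.\ siblings are $q$-inconsistent, while branches are consistent (the \textbf{c}-$\TPi$ initial-segment condition is stronger than the branch condition of \textbf{c}-TP). If $\vphi$ witnesses \textbf{c}-$\TPii$ via an array $(a_{i,j})_{i<\omega,\,j\in\textbf{c}}$ and type $q$, reassemble the rows along the levels of $\textbf{c}^{<\omega}$ by setting $b_\eta=a_{\ell-1,\eta(\ell-1)}$ for $\eta$ of length $\ell\geq 1$ (with the root a trivial dummy): siblings of a node index a fixed array row, so they are $q$-inconsistent, and a branch indexes an array path, so it is consistent. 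Either way \textbf{c}-TP holds.

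For the converse, suppose $T$ has \textbf{c}-TP but not \textbf{c}-$\TPi$. By Theorem \ref{TP1=cTP1}, \textbf{c}-$\TPi$ is equivalent to $\TPi$, so $T$ does not have $\TPi$; since the strict order property classically implies $\TPi$ (via $\mathrm{SOP}\Rightarrow\mathrm{SOP}_3\Rightarrow\TPi$), $T$ does not have $\mathrm{SOP}$ either. On the other hand, Theorem \ref{c-TP unstable theorem} shows $T$ is unstable, so Shelah's First Dichotomy (Theorem \ref{first dichotomy}) forces $T$ to have IP, $\mathrm{SOP}$, or both; having excluded $\mathrm{SOP}$, $T$ has IP. Finally, by Lemma 2.2 of \cite{Laskowski_Shelah_Karp_complexity_IP} some formula $\theta(x,y)$ of $T$ has, on a suitable infinite parameter set, the countable random graph as an induced subgraph; reproducing inside this interpreted random graph the $\textbf{c}_2$-$\TPii$ construction for the random graph from the example above --- the formula $xRy\wedge\neg xRz$ of Proposition \ref{random graph C-TP}, now with $R$ read as $\theta$ --- yields an array in $\bbM$ witnessing $\textbf{c}_2$-$\TPii$, hence \textbf{c}-$\TPii$, for the corresponding formula of $T$.

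The only inputs not already proved in the paper are the classical implication $\mathrm{SOP}\Rightarrow\TPi$ and the interpretation lemma of \cite{Laskowski_Shelah_Karp_complexity_IP}, and no step is difficult, so I do not expect a real obstacle. The one point meriting a line of care is verifying that the random-graph $\TPii$-construction depends only on the induced-subgraph structure --- which it does, since it mentions nothing but edges and non-edges among the finitely many chosen vertices $c_i,d_i$ --- so that it transports along the interpretation. It is worth remarking that this is genuinely \emph{slicker} than Shelah's classical TP $\Leftrightarrow\TPi\vee\TPii$ dichotomy: there the IP-without-$\mathrm{SOP}$ case cannot be handled so quickly, because the random graph, being simple, does \emph{not} have $\TPii$, whereas it does have \textbf{c}-$\TPii$, so here IP hands us \textbf{c}-$\TPii$ directly.
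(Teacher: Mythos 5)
Your proposal is correct and follows essentially the same route as the paper: the forward direction is the immediate rearrangement of witnesses, and the converse runs through \textbf{c}-$\TPi\Leftrightarrow\TPi$, $\mathrm{SOP}\Rightarrow\TPi$, Theorem \ref{c-TP unstable theorem}, Shelah's First Dichotomy, and the Laskowski--Shelah interpretation of the random graph to transport the \textbf{c}-$\TPii$ construction of Proposition \ref{random graph C-TP}. The only difference is that you spell out the forward direction and the $\mathrm{SOP}\Rightarrow\mathrm{SOP}_3\Rightarrow\TPi$ chain, which the paper leaves implicit.
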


We noted that the random graph has IP. It is no coincidence that this structure, typical for simple theories with IP, has \textbf{c}-$\TPii$. For it turns out that \textbf{c}-$\TPii$ is equivalent to IP.

In the next proof, the following notation for the partial type determined by a path will be helpful. 

\begin{notation}
    Suppose that $f\colon \omega\to \calI$ is a path through a set of tuples $(a_\eta)_{\eta\in X}$ indexed by a tree or array $X$. Then let $p_f(x)=\{\vphi(x,a_{f|_i})\colon i\in\omega\}. $
\end{notation}

\begin{theorem}\label{IP=cTP2 theorem}
    $T$ has \textbf{c}-$\TPii$ if, and only if, $T$ has IP. Moreover, $\vphi(x,y)$ has \textbf{c}-$\TPii$ if, and only if, a conjunction of instances of $\vphi(x,y)$ has IP.

\begin{proof}
    It suffices to show that if $T$ has \textbf{c}-$\TPii$, then it has IP, as the other implication follows immediately from the fact that the random graph has \textbf{c}-$\TPii$.

    Suppose that $\vphi(x,y)$ has \textbf{c}-$\TPii$, witnessed by a colored array $A=(a_{i,j})_{i\in\omega, j\in \textbf{c}}$, for a complete quantifier-free type $q$. Note that we can take $A$ to be a strongly \textbf{c}-indiscernible array by Proposition \ref{array indiscernibility}; local basedness will preserve the fact that the array witnesses \textbf{c}-$\TPii$ .

    The proof proceeds by induction on the number of variables in $q$. We assume throughout that $q$ is minimal, in the sense that no complete type $r$ implied by $q$ and in fewer variables gives inconsistent tuples in any of the rows of $A$. Additionally, we may assume that $q$ implies that two of the variables must be satisfied by elements with distinct colors. If $q$ has only one color, then restricting $A$ to elements indexed by that color would give an instance of ordinary $\TPii$, and thus by Proposition 3.5 of \cite{chernikov2014theories}, $T$ would have IP.

    The minimal number of variables appearing in $q$ is two. Consider the case where $q$ implies that the two indices have distinct colors in a particular order, say $R<G$. Suppose $j_0\in \textbf{c}$ is a red index and $j_1\in \textbf{c}$ is a green index such that $j_0<j_1$. Let $g\colon \omega\to \textbf{c}$ be the path where $g(i)=j_0$  if $i$ is even, and $g(i)=j_1$ if $i$ is odd. So this is a path alternating between a particular red element and a particular green element in \textbf{c}. As $g$ is a path through a \textbf{c}-$\TPii$ instance, $p_g(x)=\{\vphi(x,a_{i,g(i)})\colon i<\omega\}$ is consistent. Take a complete $A$-type $p(x)$ extending $p_g(x)$. Consider the formulas $\vphi(x, a_{i,j_1})$ for $i$ even. By our choice of $g$, $\vphi(x,a_{i,j_0})$ is in $p$. But since $(j_0,j_1)\models q$, we must have $\{\vphi(x, a_{i,j_0}), \vphi(x, a_{i,j_1})\}$ inconsistent. So $\neg\vphi(x, a_{i,j_1})\in p(x)$. Thus the set of formulas $\{\vphi(x, a_{i,j_1})\colon i\enspace\text{is odd}\}\cup \{\neg\vphi(x, a_{i,j_1})\colon i\enspace\text{is even}\}$ is a subset of $p$, and hence consistent. But since $A$ is an indiscernible array, $(a_{i, j_1})_{i\in\omega}$ is an indiscernible sequence. This witnesses the fact that $\vphi$ has IP. Note that the same example works if $q$ says just $R,G$ without requiring any particular ordering; this is the only other two variable, two color case.

    Assume for an inductive hypothesis that if $q$ has $m$-many variables, then this instance of \textbf{c}-$\TPii$ gives an instance of IP for $T$. Now suppose that $q$ has $m+1$-many variables. Assume that $q$ implies $c_0<\dotsm <c_{m-2}<R<G$, where the $c_i$ are colors which may or may not be distinct from each other and from $R,G$. While there are cases where the last two colors may be the same, there will always be some two adjacent distinct colors, and by symmetry, we may take these to be the last two. As with the two variable case, the same proof will work for quantifier free types which require fewer order relations.

    Let $j_0<j_1<j_2<\dotsm <j_{m-1}<j_{m}\in \textbf{c}$ be elements of the colors $c_0,c_1,\dotsm, c_{m-2},R,G$, respectively. Define the following paths $f_0,\dotsm, f_{m-2},h$. For $i<m-1$, let $f_i\colon \omega\to \textbf{c}$ be the constant function $f_i(k)=j_i$. Let $h\colon \omega\to \textbf{c}$ be defined by $h(k)=j_{m-1}$  if $k$ is even, and $h(k)=j_{m}$ if $i$ is odd. Like the path $g$ defined above, this path alternates between a red and a green element of \textbf{c}.

\begin{center}
\begin{tikzpicture}
    \node(a00){\color{orange}$a_{00}$};
    \node(a01)[right of = a00]{\color{blue}$a_{01}$};
    \node(a02)[right of =a01]{\color{red}$a_{02}$};
    \node(a03)[right of =a02]{\color{green}$a_{03}$};
    \node(dots01)[right of =a03]{...};
    \node(a0i)[right of = dots01]{$a_{0i}$};
    \node (dots02)[right of =a0i]{...};

    \node(a10)[above of = a00]{\color{orange}$a_{10}$};
    \node(a11)[right of = a10]{\color{blue}$a_{11}$};
    \node(a12)[right of =a11]{\color{red}$a_{12}$};
    \node(a13)[right of =a12]{\color{green}$a_{13}$};
    \node(dots11)[right of =a13]{...};
    \node(a1i)[right of = dots11]{$a_{1i}$};
    \node (dots12)[right of =a1i]{...};

    \node(a20)[above of = a10]{\color{orange}$a_{20}$};
    \node(a21)[right of = a20]{\color{blue}$a_{21}$};
    \node(a22)[right of =a21]{\color{red}$a_{22}$};
    \node(a23)[right of =a22]{\color{green}$a_{23}$};
    \node(dots21)[right of =a23]{...};
    \node(a2i)[right of = dots21]{$a_{2i}$};
    \node (dots22)[right of =a2i]{...};

    \node(a30)[above of = a20]{\color{orange}$a_{30}$};
    \node(a31)[right of = a30]{\color{blue}$a_{31}$};
    \node(a32)[right of =a31]{\color{red}$a_{32}$};
    \node(a33)[right of =a32]{\color{green}$a_{33}$};
    \node(dots31)[right of =a33]{...};
    \node(a3i)[right of = dots31]{$a_{3i}$};
    \node (dots32)[right of =a3i]{...};

    %\node(empty1)[above of =a32]{};
    %\node(empty2)[above of =a33]{};
    %\node(empty3)[above of =dots31]{};
    \node(f0)[above of = a30]{$f_0$};
    \node(f1)[above of = a31]{$f_1$};
    \node(h)[above of = a33]{$h$};
    \node(vdots 1)[above of = a3i]{$\vdots$};
    %\node(inc)[above of = a33]{inconsistent};

    %\node[draw,dashed,fit=(a02) (a33)] {};
    \draw[black](a00)--(a10);
    \draw[black](a10)--(a20);
    \draw[black](a20)--(a30);
    \draw[black](a01)--(a11);
    \draw[black](a11)--(a21);
    \draw[black](a21)--(a31);
    \draw[black](a02)--(a13);
    \draw[black](a13)--(a22);
    \draw[black](a22)--(a33);
    %\draw [decorate,
	decoration = {brace}] (empty1) --  (empty2);

\end{tikzpicture}
\end{center}

    Denote by $p_{\bar{f}h}(x)$ the union of the partial types $p_{f_0}(x)\cup p_{f_1}(x)\cup\dotsm\cup p_{f_{m-2}}(x)\cup p_h(x)$. We consider two cases, based on whether $p_{\bar{f}h}(x)$ is consistent.

    \begin{enumerate}
        \item Suppose $p_{\bar{f}h}(x)$ is consistent. Then consider a complete $A$-type $p(x)$ extending it. We proceed as with the base case. Since the array is $q$-inconsistent, our choices of $j_i$ imply that for any $k\in\omega$, the formulas $\{\vphi(x, a_{k,j_0}),
        \vphi(x, a_{k,j_2}),\dotsm, \vphi (x, a_{k, j_{m}})\}$ are inconsistent. For even $k$, $$\{\vphi(x, a_{k,j_0}),
        \vphi(x, a_{k,j_2}),\dotsm, \vphi (x, a_{k, j_{m-1}})\}\subseteq p_{\bar{f}h}(x)\subseteq p(x).$$ Thus, by the completeness of $p$, $\neg \vphi(x,a_{k,j_{m}})\in p(x)$. Moreover, for $k$ odd, $h(k)=j_m$, so $\vphi(x,a_{k,j_m})\in p(x)$. Hence we may conclude that $$\{\vphi(x, a_{k,j_{m}})\colon i\enspace\text{is odd}\}\cup \{\neg\vphi(x, a_{k,j_{m}})\colon i\enspace\text{is even}\}$$ is a subset of $p$, and thus consistent. Since the sequence $(a_{k,j_{m}})_{k\in\omega}$ is indiscernible, as a consequence of array indiscernibility, we have a witness to $\vphi$ having IP.

        \item Suppose $p_{\bar{f}h}(x)$ is inconsistent. Then by compactness there is some even integer $K\in\omega$ such that $p_{f_0|_K}(x)\cup p_{f_1|_K}(x)\cup\dotsm\cup p_{f_{m-2}|_K}(x)\cup p_{h|_K}(x)$ is inconsistent. We then define a new colored array $B=(b_{i,j})_{i\in\omega, j\in\textbf{c}}$ which witnesses \textbf{c}-$\TPii$ for the formula $\psi(x, \bar{y})=\bigwedge_{i<K}\vphi(x,y_i)$. Note that, strictly speaking, \textbf{c} might now be $\textbf{c}'$ with a different number of colors, but we continue to use \textbf{c}; it makes no difference. In particular, this array will have $r$-inconsistency for a quantifier-free type $r$ in $m$-many variables, allowing us to apply the inductive hypothesis. Assume that \textbf{c} has the order type of $\omega$ (so in particular $j\mod m$ is defined), and let $j\in \textbf{c}$ have color $e_\ell$ if and only if $\ell=j\mod m$.

        For each level $i$ of the array, we choose ``blocks" consisting of height $K$ portions of the original array $A$ determined by the paths $f_0,\dotsm, f_{m-1}, h$. To form these blocks, we choose the following values for $b_{i,j}$: For $\ell< m-1$, if $\ell=j\mod m$, then let $$b_{i,j}=(a_{i\cdot K, j_\ell},a_{i\cdot K+1,j_\ell}, a_{i\cdot K+2, j_\ell},\dotsm, a_{i\cdot K+K-1,j_\ell}).$$ If $m-1=j\mod m$, then set take an ``alternating color" block following the path $h$: $$b_{i,j}=(a_{i\cdot K, j_m},a_{i\cdot K+1,j_{m-1}}, a_{i\cdot K+2, j_m},\dotsm, a_{i\cdot K+K-2,j_m}, a_{i\cdot K+K-1, j_{m-1}}).$$

        I claim that $B$ witnesses \textbf{c}-$\TPii$ for $\psi(x,\bar{y})$ with $r=e_0<e_1<\dotsm<e_{m-1}$. First we check that paths are consistent. Observe that the blocks forming the elements of $B$ are ``properly consecutive", i.e, for each $j\in \textbf{c}$, $b_{i,j}$ contains elements of $A$ from only levels $i\cdot K, i\cdot K+1\dotsm, i\cdot K+K-1$. Moreover, elements from each of these levels of $A$ appears only once in $b_{i,j}$. As a consequence, paths in the new array correspond to paths in the original: if $d\colon \omega\to \textbf{c}$ is a path through $B$, there is a path $d'\colon \omega\to \textbf{c}$ such that $$p_{d'}(x)\vdash\{\psi(x, b_{i,d(i)})\colon i\in \omega\}.$$ Since $p_{d'}(x)$ is consistent, as paths are consistent for the original \textbf{c}-$\TPii$ witness, we must have that $\{\psi(x, b_{i,d(i)})\colon i\in \omega\}$ is consistent. 
        
        Now, to check that levels are $r$-inconsistent, we use the strong \textbf{c}-indiscernibility of the original array to reduce to the inconsistency of $p_{f_0|_K}(x)\cup p_{f_1|_K}(x)\cup\dotsm\cup p_{f_{m-2}|_K}(x)\cup p_{h|_K}(x)$. Consider $j_0'<j_1'<\dotsm<j_{m-1}'\in \textbf{c}$, where $j_s'$ has color $e_s$. First, reduce to considering the first row of the array $B$. For $i\in\omega$, the row $(b_{i,j})_{j\in \textbf{c}}$ consists of elements from $K$ consecutive rows in $A$. Since the array $A$ consists of an indiscernible sequence of rows, the type of any $K$ consecutive rows is the same. Hence $\tp((b_{i,j})_{j\in \textbf{c}})=\tp((b_{0,j})_{j\in \textbf{c}})$. So in particular, $\tp(b_{i,j_0'},\dotsm, b_{i,j_{m-1}'})=\tp(b_{0,j_0'},\dotsm, b_{0,j_{m-1}'})$. Finally, for $s<m$, observe that since $j_s'$ has the color, $e_s$, we actually have  $b_{0,j_s'}=(a_{0, j_s},a_{1,j_s},\dotsm, a_{K-1,j_s})$, if $s<m-1$, so $\psi(x,b_{0,j_s'})=\bigwedge_{i<K}\vphi(x,a_{i,j_s}).$ If $s=m-1$, we have $b_{0,j_s'}=(a_{0, j_m},a_{1,j_{m-1}},\dotsm, a_{K-2,j_m},a_{K-1,j_{m-1}})$. All together, we have that
        $\{\psi(x,b_{0,j_0'}),\dotsm, \psi(x, b_{0,j_{m-1}'})\}$ proves the following set of formulas:
        
        \[\Biggl\{ \bigwedge_{i<K}\vphi(x,a_{i,j_0}),\dots, \bigwedge_{i<K}\vphi(x,a_{i,j_{m-2}}), \vphi(x,a_{0,j_m})\land\vphi(x,a_{1,j_{m-1}})\land\dotsm\land\vphi(x,a_{K-2,j_m})\land\vphi(x,a_{K-1,j_{m-1}})\Biggr\}.\]
        
        Observing the definitions of the functions $f_0,\dotsm, f_{m-2},h$, we see that the above is equivalent to $p_{f_0|_K}(x)\cup p_{f_1|_K}(x)\cup\dotsm\cup p_{f_{m-2}|_K}(x)\cup p_{h|_K}(x)$, which was inconsistent by assumption. By the equality of types mentioned above, we conclude that $\{\psi(x,b_{i,j_0'}),\dotsm, \psi(x, b_{i,j_{m-1}'})\}$ is inconsistent, demonstrating the $r$-inconsistency of rows. So we have a witness to \textbf{c}-$\TPii$ with $r$ of length $m$, so by our inductive hypothesis, we have an instance of IP for $T$. 

        Note that this result is semi-local: if $\vphi$ has \textbf{c}-$\TPii$, the proof shows that a conjunction of instances of $\vphi$ has IP.
    \end{enumerate}

\end{proof}
\end{theorem}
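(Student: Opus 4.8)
The plan is to prove the equivalence at the level of theories, with all the work in one direction. The direction ``$T$ has IP $\Rightarrow$ $T$ has \textbf{c}-$\TPii$'' is already available: an IP theory interprets (by Lemma 2.2 of \cite{Laskowski_Shelah_Karp_complexity_IP}) a graph containing the random graph as an induced subgraph, and the array built in the proof of Proposition \ref{random graph C-TP} lives inside that subgraph and witnesses \textbf{c}-$\TPii$. So I would concentrate entirely on: if $\vphi(x,y)$ has \textbf{c}-$\TPii$, then a conjunction of instances of $\vphi$ has IP. First I would apply Proposition \ref{array indiscernibility} to replace the witnessing array $A=(a_{i,j})_{i<\omega,\,j\in\textbf{c}}$ by a strongly \textbf{c}-indiscernible one, noting that local basedness preserves both clauses of the definition. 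Then I would induct on the number $m$ of variables of the quantifier-free type $q$ witnessing row-inconsistency, assuming $q$ minimal (no implied type in fewer variables already gives inconsistency in a row). If $q$ mentions only one color, restricting $A$ to the columns of that color is an ordinary $\TPii$ witness, so $T$ has IP by Proposition 3.5 of \cite{chernikov2014theories}; hence I may also assume $q$ forces two of its variables to carry distinct colors.

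For the base case $m=2$, say $q$ refines $R<G$. Pick a red column index $j_0$ and a green one $j_1$ with $j_0<j_1$, and let $g\colon\omega\to\textbf{c}$ alternate between column $j_0$ on evens and $j_1$ on odds. Since $g$ is a path through the \textbf{c}-$\TPii$-instance, $p_g(x)$ is consistent; extend it to a complete $A$-type $p$. For even $i$ we have $\vphi(x,a_{i,j_0})\in p$, and since $(j_0,j_1)\models q$ the pair $\{\vphi(x,a_{i,j_0}),\vphi(x,a_{i,j_1})\}$ is inconsistent, forcing $\neg\vphi(x,a_{i,j_1})\in p$; for odd $i$, $\vphi(x,a_{i,j_1})\in p$. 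Thus $p$ realizes an alternating pattern along the column $(a_{i,j_1})_{i<\omega}$, which is indiscernible by array indiscernibility — i.e.\ $\vphi$ has IP.

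For the inductive step, suppose $q$ has $m+1$ variables and (by symmetry) its last two colors form $R<G$ with $R\neq G$. Choose column indices $j_0<\dots<j_m$ of the colors appearing in $q$; let $f_\ell$ ($\ell<m-1$) be the constant path at column $j_\ell$ and let $h$ alternate between columns $j_{m-1}$ and $j_m$. Split on whether $P:=p_{f_0}(x)\cup\dots\cup p_{f_{m-2}}(x)\cup p_h(x)$ is consistent. If it is, extend it to a complete $A$-type and argue exactly as in the base case: $q$-inconsistency of each row forces $\neg\vphi$ at the $j_m$-entry opposite the already-chosen entries, producing an alternating pattern on the indiscernible column $(a_{i,j_m})_{i<\omega}$, hence IP. If $P$ is inconsistent, by compactness some height-$K$ initial piece (with $K$ even) is already inconsistent; I would then form a new colored array $B$ whose column $j$, of color $e_\ell$ with $\ell=j\bmod m$, is the $K$-block $(a_{iK,j_\ell},\dots,a_{iK+K-1,j_\ell})$ for $\ell<m-1$, and for $\ell=m-1$ the ``alternating-color'' $K$-block following $h$ over rows $iK,\dots,iK+K-1$. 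Putting $\psi(x,\bar y)=\bigwedge_{i<K}\vphi(x,y_i)$, one checks that (i) the blocks are properly consecutive, so every path through $B$ projects to a path through $A$ and thus the $\psi$-type of that path is consistent; and (ii) since $A$'s rows form an indiscernible sequence, any $K$ consecutive rows of $A$ have the same type, so row-inconsistency of $B$ for $r:=e_0<\dots<e_{m-1}$ — a type in only $m$ variables — reduces precisely to the inconsistency of the height-$K$ piece of $P$. Thus $\psi$ has \textbf{c}-$\TPii$ with an $m$-variable type, and the inductive hypothesis yields IP for a conjunction of instances of $\psi$, hence of $\vphi$; this also gives the ``moreover'' (semi-local) clause, the converse of which is the easy random-graph direction.

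The main obstacle is the bookkeeping in the inductive step: arranging the block array $B$ so that paths project correctly and, crucially, so that the new row-inconsistency is governed by a genuinely shorter quantifier-free type $r$ — this strict decrease in the number of variables is exactly what makes the induction terminate. The consistent-$P$ subcase is a routine replay of the base case; the delicate point is placing the ``alternating-color block'' as the final coordinate of $\psi$ so that $q$-inconsistency of $A$'s rows, combined with row-indiscernibility, transfers cleanly to $r$-inconsistency of $B$'s rows.
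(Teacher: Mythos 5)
Your proposal is correct and follows essentially the same route as the paper's own proof: the random-graph direction for the converse, reduction to a strongly \textbf{c}-indiscernible array, induction on the number of variables of $q$ with the alternating-path base case, and the case split on consistency of $p_{f_0}\cup\dots\cup p_{f_{m-2}}\cup p_h$ with the block-array construction for $\psi=\bigwedge_{i<K}\vphi(x,y_i)$ in the inconsistent case. No substantive differences to report.
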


\bibliographystyle{alpha}
\bibliography{bibliography}

\end{document}